\theoremstyle{plain}
\newtheorem{theorem}{Theorem}[section]
\newtheorem{assertion}[theorem]{Assertion}
\newtheorem{lemma}[theorem]{Lemma}
\newtheorem{corollary}[theorem]{Corollary}
\newtheorem{conjecture}[theorem]{Conjecture}
\theoremstyle{definition}
\newtheorem{definition}[theorem]{Definition}
\theoremstyle{remark}
\newtheorem{example}[theorem]{Example}
\newcommand{\F}{\mathcal{F}}
\newcommand{\I}{\mathcal{I}}
\newcommand{\M}{\mathcal{M}}
\newcommand{\T}{\mathcal{T}}
\newcommand{\e}{\varepsilon}
\begin{document}

\title[Fractional covers of $d$-intervals]{Fractional covers and matchings in families of weighted $d$-intervals}

\author{Ron Aharoni}\thanks{The research of the first  author was
supported by  BSF grant no.
2006099, by an ISF grant and by the Discount Bank
Chair at the Technion.}
\address{Department of Mathematics,
Technion\\
Haifa, Israel} \email{raharoni@gmail.com}
\author{Tom\'{a}\v{s} Kaiser}\thanks{The second author was supported by project 14-19503S of the
Czech Science Foundation.}
\address{Department of Mathematics, Institute for Theoretical Computer Science
(CE-ITI), and European Centre of Excellence NTIS (New Technologies for
the Information Society), University of West
  Bohemia\\Pilsen, Czech Republic}\email{kaisert@kma.zcu.cz}
\author{Shira Zerbib}
\address{Department of Mathematics,
Technion\\
Haifa, Israel} \email{shira.zerbib@gmail.com}


\begin{abstract}
  A $d$-{\em interval} is a union of at most $d$ disjoint closed
  intervals on a fixed line. Tardos \cite{tardos} and the second
  author \cite{kaiser} used topological tools to bound the transversal
  number $\tau$ of a family $H$ of $d$-intervals in terms of $d$ and
  the matching number $\nu$ of $H$. We investigate the weighted and fractional
  versions of this problem and prove upper bounds that are tight up to
   constant factors. We apply both the topological method and an
  approach of Alon \cite{alon}. For the use of the latter, we prove a
  weighted version of Tur\'{a}n's theorem. We also provide a proof of
  the upper bound of \cite{kaiser} that is more direct than the
  original proof.
  \\ \ \\

  \end{abstract}

\maketitle


\section{Introduction}
\label{sec.int}

A {\em $d$-interval} is the union of at most $d$ disjoint closed
intervals on a fixed line. The $j$-th component of a $d$-interval $h$,
counted from the left to right, will be denoted by $h^j$.

We call a $d$-interval $h$ \emph{separated} if its intersection with
each interval $(i,i+1)$, where $0\leq i < d$, is either empty or
coincides with one component of $h$. For our purposes, we can
equivalently picture a separated $d$-interval as the union of $d$
possibly empty intervals, one on each of $d$ fixed parallel lines
(which is the definition used in~\cite{tardos2}). We shall also
consider {\em discrete} $d$-intervals where the lines are replaced by
finite linearly ordered sets. All results and conjectures below are
easily seen to be equivalent to their discrete versions, and sometimes
we shall switch to the discrete versions without further argument. We
shall assume that our hypergraphs are finite.

We will be interested in properties of hypergraphs whose vertex sets
are the points of the lines considered above, and whose edges are
$d$-intervals. We call them \emph{hypergraphs of $d$-intervals}.

A {\em matching} in a hypergraph $H=(V,E)$ with vertex set $V$ and
edge set $E$ is a set of disjoint edges. A {\em cover} is a subset of
$V$ meeting all edges. The \emph{matching number} $\nu(H)$ is the maximal size of a matching, and
the \emph{covering number}, or  \emph{transversal number} $\tau(H)$ is the minimal size of a
cover. The fractional relaxations of $\nu$ and $\tau$ are
denoted, as usual, by $\nu^*$ and $\tau^*$, respectively. By Linear Programming duality, $\nu^*=\tau^*$.

An old result of Gallai is that if $H$ is a hypergraph of ($1$-)intervals, then $\tau(H)=\nu(H)$. In~\cite{gl1, gl2} the authors raised the problem of the best bound on the ratio $\frac{\tau}{\nu}$
in hypergraphs of $2$-intervals, with the natural conjecture being that the answer is $2$.
This was proved by Tardos \cite{tardos},
using algebraic topology. A simpler proof was then found by the
second author \cite{kaiser}, who also extended the theorem to
hypergraphs consisting of $d$-intervals:

\begin{theorem}\label{t:kaiser}
  For a hypergraph $H$ of
  $d$-intervals,
  \begin{equation*}
    \frac{\tau(H)}{\nu(H)} \leq d^2-d+1.
  \end{equation*}
  In the separated case, the upper bound improves to $d^2-d$.
\end{theorem}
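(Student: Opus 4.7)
The plan is to follow the topological approach of Tardos and Kaiser. I would first reduce to the discrete, separated setting; the separated bound $d^2-d$ is the primary target, and the extra $+1$ in the general bound accounts for the additional freedom to place two components of the same $d$-interval arbitrarily close on a single shared line, which introduces one extra mode of interaction between intersecting $d$-intervals.

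The heart of the argument is topological and rests on the Helly property of $1$-intervals. On each of the $d$ fixed lines $L_i$, the subfamily $\{h^i : h \in H\}$ is a hypergraph of $1$-intervals whose nerve is collapsible. I would assemble these $d$ collapsible nerves into a single simplicial complex whose Leray number (or connectivity after appropriate truncation) is controlled by $d$. A colorful Helly or KKM-type theorem would then relate $\tau(H)$ to $\nu(H)$: assuming $\tau(H) > (d^2-d+1)\nu(H)$, a covering of an appropriate simplex by the cover points would produce a fixed-point-free map contradicting the connectivity obstruction coming from the $d$-interval nerve. This chain --- reduction to the separated case, nerve analysis on each line, and invocation of a colorful topological theorem on their join --- is the classical route that yields the integer bound directly rather than only a fractional one.

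The main obstacle is extracting the tight constant $d^2-d+1$, rather than a weaker bound such as the $2d^2$ accessible via a Turán-type argument. The quadratic growth in $d$ reflects that two intersecting $d$-intervals can be forced to meet in one of $d^2$ possible pairs of components: the $d$ diagonal pairs (same line index) can each be controlled by a separate application of Gallai's theorem $\tau=\nu$ for $1$-intervals, while the remaining $d^2-d$ off-diagonal pairs must be handled simultaneously, and it is precisely this simultaneous handling that the topological machinery accomplishes. Making this counting rigorous --- and in particular pinning down why exactly one extra point per matching edge is needed in the non-separated case --- is the technical crux I would expect to spend the most effort on.
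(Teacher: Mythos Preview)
Your proposal has the right flavor --- a KKM-type topological argument --- but it is missing the two concrete ingredients that actually produce the constants $d^2-d+1$ and $d^2-d$, and your heuristic for where these numbers come from does not match what happens in the proof.

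The paper does not analyze nerves or Leray numbers. Instead, assuming $\tau(H)>k$, a point $\vec{x}\in\Delta_k$ is interpreted as a placement of $k$ candidate cover points; since these fail to cover, some $h\in H$ survives, and one records only the set $I\subseteq[k+1]$ of at most $d$ ``gaps'' into which the components of $h$ fall. The KKMS theorem (Komiya's version for the separated case, on $\Delta_k^d$) then yields a \emph{balanced} collection $\I$ of such index sets, each of size at most $d$, together with a single $\vec{x}$ lying in all the corresponding $B_I$. The step you are missing is that the constant is now read off from a purely combinatorial fact about $\I$: balancedness gives $\nu^*(\I)\ge (k+1)/d$, and F\"uredi's theorem converts this to $\nu(\I)\ge\nu^*(\I)/(d-1+\tfrac1d)$, whence $\nu(\I)>(k)/(d^2-d+1)$. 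A matching in $\I$ pulls back to a matching in $H$ via the witnesses $h(I)$. Your sketch never passes to this auxiliary hypergraph and never invokes F\"uredi's bound, so as written there is no mechanism that would output the number $d^2-d+1$ rather than some unspecified quadratic function of $d$.

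Your explanation of the improvement in the separated case is also off. It is not about ``one extra point per matching edge'' or ``off-diagonal pairs.'' In the separated case the Komiya argument on $\Delta_k^d$ produces an auxiliary hypergraph that is $d$-\emph{partite}, hence contains no copy of the $d$-uniform projective plane; F\"uredi's theorem then gives the sharper ratio $\nu\ge\nu^*/(d-1)$ instead of $\nu\ge\nu^*/(d-1+\tfrac1d)$, and that is exactly the gap between $d^2-d$ and $d^2-d+1$. Without identifying F\"uredi's theorem (and the role of $d$-partiteness in sharpening it), your outline cannot recover either constant.
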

Matou\v{s}ek \cite{matousek} showed that this bound is not far from
the truth: there are examples of intersecting hypergraphs of
$d$-intervals in which $\tau=\frac\tau\nu = \Omega(\frac{d^2}{\log
  d})$. The proof of Theorem~\ref{t:kaiser}
in \cite{kaiser} relies on reducing the problem to a discrete problem
on $d$-uniform hypergraphs and the fact that in such hypergraphs
$\tau^* \le d\nu$.

Another approach was taken by Alon \cite{alon} who proved a slightly weaker
upper bound for the non-separated case, namely $\frac{\tau}{\nu} \leq
2d^2$. Like in~\cite{kaiser}, there is a central role in this proof to fractional versions.
 Alon breaks his bound into the following two results:

\begin{theorem}\label{nutau*}
$\tau^* \le 2d\nu$.
\end{theorem}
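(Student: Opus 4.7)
The plan is to prove the equivalent dual statement $\nu^*(H)\le 2d\,\nu(H)$ and invoke LP duality $\tau^*=\nu^*$ at the very end. The strategy, following Alon, is to study the intersection graph of the $d$-intervals in the support of an optimal fractional matching and then extract a large independent set in that graph via a weighted Tur\'an-type inequality; such an independent set is a matching in $H$.

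Concretely, I would fix an optimal fractional matching $w^*$ with $W:=\sum_h w^*(h)=\nu^*(H)$; it satisfies $\sum_{h\ni p}w^*(h)\le 1$ for every point $p$ on the underlying line. Let $G$ be the intersection graph on $\mathrm{supp}(w^*)$, with vertex weights inherited from $w^*$ and with $h\sim h'$ precisely when $h\cap h'\ne\emptyset$. Independent sets in $G$ correspond to matchings in $H$, so the theorem reduces to producing a $w^*$-weighted independent set of weight at least $W/(2d)$.

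The combinatorial heart of the argument is the edge-mass bound $M:=\sum_{\{h,h'\}\in E(G)}w^*(h)w^*(h')\le d\,W$. To prove it I would observe that whenever $h\cap h'\ne\emptyset$, the leftmost point of $h\cap h'$ is a left endpoint of some component of $h$ (forced to lie inside $h'$) or of some component of $h'$ (forced to lie inside $h$). Assigning each intersecting pair to one such canonical ``witness endpoint'' and summing, each $d$-interval $h$ contributes at most $d$ witnesses (one per component), while the fractional matching condition $\sum_{h\ni p}w^*(h)\le 1$ at each witness point bounds the contributing partner weight by $1$; the product $w^*(h)w^*(h')$ sums up to at most $d\,W$.

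To finish, I would apply a weighted Tur\'an inequality — essentially a vertex-weighted Caro--Wei bound of the form $\alpha_{w^*}(G)\ge W^2/\bigl(\sum_h w^*(h)^2+2M\bigr)$. Since $w^*(h)\le 1$ gives $\sum_h w^*(h)^2\le W$, combining with $M\le dW$ produces an independent set of weight at least $W/(2d+1)$, which is the desired $W/(2d)$ bound up to the additive constant in the denominator. The main obstacle, and the place where the paper's weighted Tur\'an theorem is genuinely needed rather than its naive form, is this last tightening: the constant must be trimmed from $2d+1$ to $2d$, which I expect either comes from a sharper weighted Tur\'an statement, from exploiting that integer independent sets beat the fractional Caro--Wei estimate by at least one, or from a more careful accounting that attributes witnesses symmetrically between $h$ and $h'$.
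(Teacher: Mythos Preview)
Your route is sound and genuinely different from the paper's, but two small points deserve attention.

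First, a notational slip: the inequality you invoke is the Motzkin--Straus bound
\[
\alpha(G)\ \ge\ \frac{1}{\sum_h x_h^2 + 2\sum_{\{h,h'\}\in E(G)} x_h x_{h'}}
\qquad\text{for any }x\text{ in the simplex},
\]
which controls the \emph{unweighted} independence number $\alpha(G)=\nu(H)$, not $\alpha_{w^*}(G)$. (The weighted form you wrote is false in general.) Fortunately $\nu(H)$ is exactly what you need, so with $x_h=w^*(h)/W$ the argument goes through.

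Second, the $2d+1\to 2d$ gap closes by the very ``more careful accounting'' you anticipate. At each witness point $p$ (a left endpoint of some $h^j$) you used $\sum_{h'\ni p}w^*(h')\le 1$; but $h$ itself contains $p$, so in fact $\sum_{h'\ni p,\,h'\ne h}w^*(h')\le 1-w^*(h)$. This sharpens your edge-mass bound to $M\le d\bigl(W-\sum_h w^*(h)^2\bigr)$, whence
\[
\sum_h w^*(h)^2+2M\ \le\ 2dW-(2d-1)\sum_h w^*(h)^2\ \le\ 2dW,
\]
and Motzkin--Straus gives $\nu(H)=\alpha(G)\ge W/(2d)$ on the nose.

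For comparison, the paper (following Alon) does \emph{not} extract a large independent set. Instead it applies Tur\'an's theorem to the intersection graph to find a single $d$-interval $a$ whose endpoints pierce at least $W/K$ edges (where $K=\nu(H)$); pigeonholing over the at most $2d$ endpoints of $a$ yields a point $x$ contained in at least $W/(2dK)$ edges, and the fractional-matching constraint at $x$ then forces $W\le 2dK$. Your approach is arguably cleaner in that it produces the matching directly. The paper's approach, on the other hand, is what enables the generalization to arbitrary edge weights (Theorem~\ref{tauw*}): there one needs $\alpha_w$, Motzkin--Straus no longer applies, and the directed weighted Tur\'an theorem (Theorem~\ref{wtrefined}) is brought in precisely to locate a high-outdegree vertex in the ``piercing'' digraph.
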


\begin{theorem}\label{nu*tau}
$\tau \le d\tau^*$.
\end{theorem}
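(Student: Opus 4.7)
The plan is to reduce the problem to Gallai's classical theorem on interval hypergraphs, which asserts $\tau = \nu = \tau^*$ for any family of closed intervals on a line. First I would take an optimal fractional cover $x\colon V\to[0,1]$ of $H$, so that $\sum_v x_v = \tau^*(H)$. For each edge $h\in H$ with components $h^1,\ldots,h^d$ (some possibly empty), the inequality $\sum_{j=1}^{d} x(h^j)\ge 1$ forces, by averaging, the existence of an index $j(h)$ with $x(h^{j(h)})\ge 1/d$; I would fix one such index for every $h$.

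Next I would form the auxiliary interval hypergraph $\F = \{h^{j(h)} : h\in H\}$, whose edges are ordinary closed intervals on the underlying line (or spread across the $d$ parallel lines in the separated case). The rescaled function $d\cdot x$ is a fractional cover of $\F$ of total weight $d\tau^*(H)$, because for every edge $h^{j(h)}\in\F$,
\[
\sum_{v\in h^{j(h)}} d\,x_v \;=\; d\cdot x(h^{j(h)}) \;\ge\; 1.
\]

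Then, applying Gallai's theorem (line by line in the separated case, and once in the non-separated case) to the interval hypergraph $\F$, one obtains an integral cover $C$ of $\F$ with
\[
|C| \;\le\; \tau^*(\F) \;\le\; d\tau^*(H).
\]
Since $h^{j(h)}\subseteq h$, every vertex in $C$ that meets the projected edge $h^{j(h)}$ automatically meets the original $d$-interval $h$, so $C$ covers $H$. This yields $\tau(H)\le d\tau^*(H)$, as desired.

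The only non-routine ingredient is the projection step: to exploit Gallai's theorem I must throw away all but one component of each $d$-interval, and I must do so while retaining enough fractional weight on the surviving component to still get a fractional cover after rescaling. The pigeonhole choice of $j(h)$ is exactly what makes this work, and it is precisely this averaging step that contributes the factor $d$ in the bound; the remainder of the argument is bookkeeping.
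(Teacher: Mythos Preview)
Your proof is correct. It differs from the paper's argument, which (following Alon, and stated here in the weighted form as Theorem~\ref{wtautau*}) proceeds by direct rounding: normalize an optimal rational fractional cover to take the constant value $\tfrac1q$ on a multiset $P$ of $p$ points with $d\mid q$, then pick every $(q/d)$-th point of $P$ in left-to-right order to obtain an integral cover of size $pd/q=d\tau^*$. Both arguments hinge on the same pigeonhole step---some component of each $d$-interval receives at least $1/d$ of the cover weight---but diverge in how they round: the paper samples along the line, while you pass to the auxiliary interval hypergraph $\F$ and invoke Gallai's theorem (equivalently, $\tau(\F)=\tau^*(\F)$ for intervals). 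Your reduction is clean and modular; the paper's sampling is self-contained and extends verbatim to the weighted version $\tau_w\le d\tau_w^*$ without needing a weighted analogue of Gallai's theorem.
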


Theorem \ref{nu*tau} follows by a relatively simple sampling
argument. (As will be shown below, it also follows from the arguments
in \cite{kaiser}.)
The bound in Theorem \ref{nutau*} is proved using
Tur\'{a}n's theorem.

As noted, both approaches use fractional parameters.  This makes
finding the right relations between them and the integral parameters
of interest. In particular, the following conjecture is appealing:

\begin{conjecture}\label{conj:tau*nu}
  In a hypergraph of separated $d$-intervals, $\tau^* \le d\nu$.
\end{conjecture}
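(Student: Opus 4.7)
My plan is to approach the conjecture via LP duality, which reduces the statement to $\nu^*(H)\le d\,\nu(H)$. Let $g\colon E\to[0,1]$ be a maximum fractional matching of $H$, and for each line $i\in\{1,\dots,d\}$ let $H_i$ denote the hypergraph of $1$-intervals whose edges are the non-empty $i$-th components $\{e^i:e^i\neq\emptyset\}$. The separated structure ensures that the restriction of $g$ to the family $\{e:e^i\neq\emptyset\}$ is itself a fractional matching of $H_i$, since for a vertex $v$ on line $i$ one has $v\in e^i$ if and only if $v\in e$, so the constraint $\sum_{e\ni v}g(e)\le 1$ coincides with the matching constraint in $H_i$. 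Because $H_i$ is an interval hypergraph, its incidence matrix is totally balanced (equivalently, Helly's theorem holds on the line), and Gallai's theorem yields $\nu^*(H_i)=\nu(H_i)$, whence
\[
 \sum_{e\,:\,e^i\neq\emptyset} g(e)\;\le\;\nu(H_i).
\]

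The crux is to combine these $d$ per-line inequalities into the global bound $\sum_e g(e)\le d\,\nu(H)$. A direct summation over $i$ yields only $\nu^*(H)\le \sum_i \nu(H_i)$, and the inequality $\nu(H_i)\le \nu(H)$ fails badly in general: if many edges of $H$ share a single vertex on line $2$ while having pairwise disjoint components on line $1$, then $\nu(H_1)$ can be arbitrarily large even though $\nu(H)=1$. The content of the conjecture is precisely that this discrepancy is always compensated for by the clique constraints that intersections on the \emph{other} lines impose on $g$, a fact invisible to any line-by-line argument.

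To exploit this compensation, I would attempt induction on $d$, with base case $d=1$ given by Gallai's theorem. For the inductive step, fix a maximum matching $M_i$ of $H_i$ of size $\nu(H_i)$. Deleting the $i$-th component of each edge of $M_i$ produces a family of $(d-1)$-intervals whose intersection pattern on the remaining $d-1$ lines records exactly the obstructions preventing $M_i$ from being a matching of $H$. Applying the inductive fractional bound to this projected family, combined with the LP bound $\sum_{e:e^i\neq\emptyset}g(e)\le\nu(H_i)$ above, should in principle produce a fractional cover of total weight at most $d\,\nu(H)$, provided the dual variables are chosen uniformly across lines so that overlapping contributions are not counted multiple times.

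The main obstacle will be controlling the double counting across lines. When the same projected matching of $(d-1)$-intervals arises from several choices of $i$, a naive combination of the inductive bounds introduces an extra factor of two, which is exactly the loss in Alon's Theorem~\ref{nutau*}. Avoiding this loss appears to require a genuinely joint fractional argument --- perhaps a decomposition of $g$ into $d$ ``line-local'' pieces whose aggregate weight is witnessed by integer matchings of $H$ --- rather than a separate analysis on each line followed by a union bound. Formulating such a decomposition, rather than any subsequent technical calculation, is in my view the real difficulty, and it is presumably why the statement remains conjectural.
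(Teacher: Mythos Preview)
The statement you are addressing is a \emph{conjecture} in the paper, not a theorem: the paper offers no proof of it and explicitly leaves it open. The strongest results the paper establishes in this direction are the weaker bounds $\tau^*\le 2d\nu$ (Theorem~\ref{nutau*}, Alon's argument via Tur\'an) and the topological bound $\tau^*\le (4d-6+\tfrac{3}{d})\nu$ of Section~\ref{toptau*nu}. There is therefore no proof in the paper to compare your proposal against.

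Your proposal is not a proof either, and to your credit you say so yourself in the last paragraph. The preliminary reductions are sound: LP duality does reduce the question to $\nu^*\le d\nu$, and for each line $i$ the restriction of a fractional matching $g$ to $\{e:e^i\neq\emptyset\}$ is a fractional matching of the interval hypergraph $H_i$, so $\sum_{e:e^i\neq\emptyset}g(e)\le\nu(H_i)$ is correct. But the inductive step you sketch has a real gap precisely where you locate it: summing the per-line bounds gives only $\nu^*\le\sum_i\nu(H_i)$, and you have no mechanism to pass from this to $d\,\nu(H)$ without the factor-of-two loss that you (correctly) identify with Alon's $2d$ bound. The ``decomposition of $g$ into $d$ line-local pieces'' you propose as a remedy is not constructed, and there is no indication that such a decomposition exists; this is the missing idea, not a technicality. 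In sum, you have accurately diagnosed why the na\"{\i}ve approaches stall, but you have not advanced beyond them, and neither does the paper.
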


The conjecture is not true in the non-separated case, as shown by an
intersecting family of 2-intervals with $\tau^*\geq\tfrac{11}4$
constructed by Gy\'{a}rf\'{a}s~\cite[p.~45]{Gya}. An open question
in~\cite{Gya} is whether $\tau^*<3$ for intersecting families of
(non-separated) 2-intervals.


Besides the fractional parameters, we shall study also the weighted
versions of the matching and covering numbers, as defined below. For a real valued
function $f$ on a set $T$, we let $f[T]$ denote the sum $\sum_{t\in
T}f(t)$.

\begin{definition}
Given a hypergraph $H$, a \emph{weight system on $E(H)$} is a function
 $w:~ E(H) \to \mathbb{N}$. A function $g: V(H) \to \mathbb{N}$ is called a {\em $w$-cover} if $\sum_{v \in e}g(v) \ge w(e)$ for all $e \in E(H)$.
 Let $\nu_w(H)$ be the maximum of $w[F]$ over all matchings $F$ in $H$, and let
$\tau_w(H)$ be the minimum of $g[V]$ over all $w$-covers $g$.
\end{definition}

It is likely that the bounds (conjectured as well as proved) that are true in the non-weighted case
are true also in the weighted case.

\begin{conjecture}\label{tauw*nuw}
  Let $H$ be a hypergraph of separated $d$-intervals and $w$ a weight
  system on $E(H)$. Then $\tau^*_w(H) \le d\nu_w(H)$.
\end{conjecture}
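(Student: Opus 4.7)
The plan is to try to replay, in the weighted setting, the two-step argument underlying the proof of Theorem~\ref{t:kaiser} in \cite{kaiser}: first reduce the $d$-interval question to an auxiliary $d$-uniform hypergraph $K$ built from the discrete model of $H$, and then invoke the weighted analogue $\tau^*_{w_K}(K) \le d\,\nu_{w_K}(K)$ in $d$-uniform hypergraphs. I would pass to the discrete model and scale $w$ so that it is integer-valued; build $K$ exactly as in \cite{kaiser}, letting $w_K$ be the pullback of $w$ along the natural map from $E(K)$ to $E(H)$; and aim to establish both $\tau^*_{w_K}(K) \ge \tau^*_w(H)$ and $\nu_{w_K}(K) \le \nu_w(H)$. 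Granting these, the conjecture reduces to the weighted $d$-uniform inequality.

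I expect the $d$-uniform step to be the easier half: given an optimal weighted fractional matching $y^*$ on $K$, whose value equals $\tau^*_{w_K}(K)$ by LP duality, a greedy or shifting procedure parallel to the unweighted proof of $\tau^*\le d\nu$ for $d$-uniform hypergraphs should extract an integer matching $F\subseteq E(K)$ with $w_K[F]\ge\tau^*_{w_K}(K)/d$. The main obstacle lies in the matching-preservation half of the first step, $\nu_{w_K}(K)\le\nu_w(H)$: while every weighted matching of $H$ lifts naturally to $K$, one must rule out that the Kaiser reduction manufactures artificial weighted matchings by splitting a single edge of $H$ into several $K$-edges that can coexist in one matching of $K$.

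As a sanity check against the more naive approach of deducing Conjecture~\ref{tauw*nuw} directly from Conjecture~\ref{conj:tau*nu}: blowing up each point on each line into $N$ consecutive copies and slicing each $e$ into $w(e)$ consecutive sub-intervals of the thickened $e$ does not preserve $\nu_w$. Already for $d=1$, edges $e=[1,2]$, $e'=[2,3]$ with weights~$2$ each, the blown-up hypergraph $H'$ has $\nu(H')=3$ whereas $\nu_w(H)=2$, so applying Conjecture~\ref{conj:tau*nu} to $H'$ would yield only $\tau^*_w(H)\le 3d$, losing the intended constant. If the $d$-uniform reduction likewise resists the passage to weights, the contingency plan is to rerun the topological argument of \cite{kaiser} natively with weights, via a weighted KKM- or Sperner-type lemma, or to combine Alon's method with the weighted Tur\'an theorem advertised in the abstract in order to obtain at least a weighted analogue $\tau^*_w\le 2d\,\nu_w$ in the separated case.
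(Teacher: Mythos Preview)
This statement is Conjecture~\ref{tauw*nuw} in the paper, not a theorem: the paper does \emph{not} prove it, and in fact even its unweighted special case (Conjecture~\ref{conj:tau*nu}, $\tau^*\le d\nu$ for separated $d$-intervals) is presented as open. So there is no ``paper's own proof'' to compare against; what the paper actually proves is the weaker bound $\tau^*_w\le 2d\,\nu_w$ (Theorem~\ref{tauw*}), which is exactly your announced fallback.

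Your plan has a genuine gap, and it is not where you locate it. The $d$-uniform step you call ``the easier half'' is indeed easy, and your instinct is right: in any hypergraph with all edges of size at most $d$, one has $\tau^*_w=\nu^*_w\le d\,\nu_w$ by greedily picking a remaining edge $e$ of maximum weight, noting that $\sum_{e'\cap e\neq\emptyset} w(e')f(e')\le w(e)\sum_{e'\cap e\neq\emptyset} f(e')\le d\,w(e)$, and iterating. The real obstacle is the reduction itself. The Kaiser argument (re-presented in Section~\ref{sec.tardos-kaiser} via KKMS/Komiya) does \emph{not} produce an auxiliary $d$-uniform hypergraph $K$ with $\tau^*_{w_K}(K)\ge\tau^*_w(H)$. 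What it does is: from the assumption $\tau(H)>kd$ it extracts, via a topological fixed-point theorem, a single point $\vec{x}$ and a balanced $d$-partite hypergraph $D$ on $[k+1]\times[d]$ with $\nu^*(D)=k+1$, whose matchings lift to matchings of $H$. This compares $\tau(H)$ to $\nu^*(D)$ and thence to $\nu(H)$; it gives no handle on $\tau^*(H)$ versus $\tau^*(K)$ for any $K$ built independently of $\vec{x}$. In particular there is no ``natural map from $E(K)$ to $E(H)$'' along which to pull back $w$: the auxiliary hypergraph only exists \emph{after} the topological step has been applied, and it already encodes $\tau$, not $\tau^*$. Section~\ref{toptau*nu} of the paper shows what the topological method currently yields toward $\tau^*/\nu$: only $4d-6+3/d$, not $d$.

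So your proposal does not close the gap, for the same reason the authors leave the conjecture open; your contingency plan (weighted Tur\'an plus Alon's approach) reproduces Theorem~\ref{tauw*} rather than improving on it.
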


In  Section \ref{sec.turan} we shall prove a weighted and directed version of Tur\'an's theorem, and in Section \ref{weightedalon} we shall use it to prove:
\begin{theorem}\label{tauw*}
If $H$ is a hypergraph of  $d$-intervals
  and $w$ a weight system on $E(H)$, then $\tau^*_w(H) \le 2d \nu_w(H)$.
\end{theorem}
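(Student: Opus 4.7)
The strategy is to adapt Alon's argument for Theorem~\ref{nutau*} to the weighted setting, by feeding a carefully oriented conflict digraph into the weighted directed Tur\'an-type theorem to be established in Section~\ref{sec.turan}. First, LP duality gives $\tau^*_w(H) = \nu^*_w(H)$, where $\nu^*_w(H)$ is the maximum of $\sum_{e \in E(H)} w(e) x(e)$ over functions $x \colon E(H) \to [0,1]$ satisfying $\sum_{e \ni v} x(e) \le 1$ for every $v \in V(H)$. It therefore suffices to produce an integer matching $M$ in $H$ with $w(M) \ge \nu^*_w(H)/(2d)$. I will fix any optimizer $x^*$ attaining $\nu^*_w(H) = \sum_e w(e) x^*(e)$, and treat this as an auxiliary weight function on $E(H)$.

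Next, I will orient the conflict graph on $E(H)$ into a digraph $D$. Whenever two distinct edges $e, e' \in E(H)$ intersect, some component of $e$ must meet some component of $e'$, and two intersecting closed intervals on a line always satisfy that the left endpoint of (at least) one lies inside the other. Accordingly, I draw the arc $e \to e'$ whenever some left endpoint of a component of $e$ lies in $e'$, choosing either direction when both hold; by the above remark, at least one of the two arcs is present for every conflicting pair, so the underlying undirected graph is exactly the conflict graph, and matchings of $H$ correspond to independent sets of $D$. Writing $L_1(e), \ldots, L_d(e)$ for the (up to $d$) left endpoints of the components of $e$, each arc $e \to e'$ is witnessed by some $L_j(e) \in e'$, and the fractional matching constraints $\sum_{e'' \ni L_j(e)} x^*(e'') \le 1$ summed over $j$ will yield the $x^*$-weighted out-degree bound
\[
\sum_{e' \,:\, e \to e'} x^*(e') \;\le\; d \qquad \text{for every } e \in E(H).
\]

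Finally, I will invoke the weighted directed Tur\'an theorem of Section~\ref{sec.turan} on the digraph $D$, with primary vertex weights $w$ and the auxiliary function $x^*$. The expected form of that theorem is that an $x^*$-weighted out-degree bound of $d$ forces an independent set $I$ of the underlying graph with $w(I) \ge \tfrac{1}{2d}\sum_v w(v) x^*(v)$; such an $I$ is then a matching in $H$ with $w$-weight at least $\nu^*_w(H)/(2d)$, which proves the theorem. The main obstacle will be the Tur\'an step, specifically ensuring that the version proved in Section~\ref{sec.turan} is strong enough to translate an out-degree bound measured by one weight function ($x^*$) into an independent-set lower bound measured by another ($w$); the orientation and left-endpoint counting are routine once the $d$-interval structure has been exploited.
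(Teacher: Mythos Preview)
Your plan has the right overall architecture (LP duality, a conflict digraph, a Tur\'an step), but the Tur\'an ingredient you are counting on is not the one Section~\ref{sec.turan} supplies, and in fact the statement you hope for is false as a general digraph fact. Theorem~\ref{wtrefined} is a \emph{single}-weight result and, crucially, assumes that every adjacent pair is joined by \emph{at least two} directed edges. Your left-endpoint orientation can produce just one arc between an intersecting pair (e.g.\ for $1$-intervals $[0,2]$ and $[1,3]$ only $[1,3]\to[0,2]$ is drawn), so the hypothesis of Theorem~\ref{wtrefined} fails. Worse, the ``two-weight'' Tur\'an you describe---``$x^*$-weighted out-degree $\le d$ forces an independent set $I$ with $w(I)\ge\tfrac1{2d}\sum_v w(v)x^*(v)$''---is refuted already in the single-weight case by a directed triangle $a\to b\to c\to a$ with $x^*=w\equiv 1$: here $d=1$, the right-hand side is $3/2$, but every independent set has weight $\le 1$. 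So the obstacle you flagged is real, and the argument cannot be completed as written.

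The paper's proof runs in the opposite direction to yours on the Tur\'an step. First it clears denominators and duplicates edges so that the fractional matching becomes constant $1/q$; this collapses the two weights $w$ and $x^*$ into the single weight $w$ on a multiset of edges. Second, it uses \emph{both} endpoints (not just left ones), which guarantees two arcs between every intersecting pair and hence places the digraph within the scope of Theorem~\ref{wtrefined}. Third, instead of using a Tur\'an bound to exhibit a large independent set, it uses Theorem~\ref{wtrefined} to lower-bound $\sum_e w(e)\deg^+(e)$, extracts a single edge $a$ of high out-degree (at least $W/K$ counting the self-loop), and then pigeonholes over the at most $2d$ endpoints of $a$ to find a point lying in at least $W/(2dK)$ edges of the multiset---which contradicts the fractional-matching constraint $\sum_{e\ni x}f(e)\le 1$ unless $W/q\le 2dK$. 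If you want to salvage your outline, the cleanest fix is to adopt the duplication step (turning $x^*$ into multiplicities) and use both endpoints; then Theorem~\ref{wtrefined} applies, but you will be led to the high-degree-point argument rather than to an independent set directly.
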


A straightforward generalization of the proof of Theorem \ref{nu*tau} will yield:
\begin{theorem}\label{wtautau*} If $H$ is a hypergraph of  $d$-intervals
  and $w$ a weight system on $E(H)$, then
$\tau_w(H) \le d\tau^*_w(H)$.
\end{theorem}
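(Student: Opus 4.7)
The plan is to mimic Alon's proof of Theorem~\ref{nu*tau}: reduce from $d$-intervals to $1$-intervals by selecting a ``heavy'' component of each edge, and then invoke the integrality of the weighted covering LP for interval hypergraphs.

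Concretely, I would fix an optimal fractional $w$-cover $g^* \colon V(H) \to \mathbb{R}_{\geq 0}$, so that $g^*[V(H)] = \tau^*_w(H)$ and $g^*[h] \geq w(h)$ for every $h \in E(H)$. Scaling by $d$ gives $(dg^*)[h] \geq d\,w(h)$, and since $h$ decomposes as $h = h^1 \cup \cdots \cup h^d$ we have $\sum_{j=1}^d (dg^*)[h^j] \geq d\,w(h)$. By pigeonhole there exists an index $j(h)$ with $(dg^*)[h^{j(h)}] \geq w(h)$.

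Next, consider the family $\J$ of $1$-intervals $\{h^{j(h)} : h \in E(H)\}$, in which each $h^{j(h)}$ inherits the weight $w(h)$. On $\J$, the function $dg^*$ is a fractional $w$-cover of total mass $d\,\tau^*_w(H)$, and hence $\tau_w^*(\J) \leq d\,\tau^*_w(H)$. The incidence matrix of any family of intervals has the consecutive-ones property and is therefore totally unimodular, so the covering LP attains an integer optimum; this gives $\tau_w(\J) = \tau^*_w(\J) \leq d\,\tau^*_w(H)$. Let $g$ be an integer $w$-cover of $\J$ attaining $\tau_w(\J)$.

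Finally, since $h^{j(h)} \subseteq h$ we have $g[h] \geq g[h^{j(h)}] \geq w(h)$ for every $h \in E(H)$, so $g$ is also an integer $w$-cover of $H$ itself, yielding $\tau_w(H) \leq g[V(H)] \leq d\,\tau^*_w(H)$. The only step that goes beyond elementary bookkeeping is the integrality of the weighted interval covering LP; this is the natural weighted extension of Gallai's theorem, and it is what makes this generalization of Theorem~\ref{nu*tau} a matter of inserting $w$ in the right places rather than doing genuinely new work.
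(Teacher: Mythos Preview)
Your argument is correct, but it is not quite the paper's proof. The paper follows Alon's original \emph{sampling} argument rather than an LP-integrality reduction: it takes a rational optimal fractional $w$-cover, normalizes it to have value $\tfrac{1}{q}$ on each of $p$ points (arranging $d\mid q$), and then keeps every $m$-th point in the left-to-right order, where $m=q/d$. Each edge $h$ contains at least $qw(h)$ of the $p$ points, so some component of $h$ contains at least $qw(h)/d$ of them, and hence at least $w(h)$ of the sampled points; the sampled set thus serves as an integral $w$-cover of size $p/m=d\tau^*_w$. Both proofs share the same pigeonhole step that locates a ``heavy'' component; where they diverge is in how the resulting $1$-interval covering problem is rounded. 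Your route packages that step as a single appeal to total unimodularity of interval matrices (the weighted Gallai theorem), which is clean and makes the dependence on the $1$-interval structure completely explicit, while the paper's sampling is more elementary, fully constructive, and requires no LP machinery beyond the existence of a rational optimum.
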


Combining the two results, we have:
\begin{corollary}\label{2d2}
If $H$ is a hypergraph of  $d$-intervals
  and $w$ a weight system on $E(H)$,  then $\tau_w(H) \le 2d^2 \nu_w(H)$.
\end{corollary}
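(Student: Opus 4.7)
The plan is simply to chain the two preceding theorems. By Theorem \ref{wtautau*} applied to $H$ and $w$, we have $\tau_w(H) \le d\,\tau^*_w(H)$. By Theorem \ref{tauw*} applied to the same $H$ and $w$, we have $\tau^*_w(H) \le 2d\,\nu_w(H)$. Substituting the second inequality into the first yields $\tau_w(H) \le d \cdot 2d\,\nu_w(H) = 2d^2\,\nu_w(H)$, which is the claim.

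There is no genuine obstacle in the corollary itself: both component theorems are stated for an arbitrary hypergraph of $d$-intervals equipped with an arbitrary weight system, so the hypotheses of Corollary \ref{2d2} match those of Theorems \ref{tauw*} and \ref{wtautau*} exactly, with no side conditions to verify and no intermediate reduction required. All of the real content has been pushed into the two theorems being combined — the weighted, directed Tur\'an-type estimate that feeds into Theorem \ref{tauw*}, and the sampling/rounding argument generalized from Theorem \ref{nu*tau} that yields Theorem \ref{wtautau*}. Once those are in hand, the corollary is a one-line multiplication, so I expect the write-up to consist of nothing more than citing the two theorems in sequence.
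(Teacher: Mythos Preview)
Your proof is correct and matches the paper's approach exactly: the corollary is stated right after the words ``Combining the two results, we have,'' and the intended argument is precisely the chaining $\tau_w \le d\,\tau^*_w \le d\cdot 2d\,\nu_w = 2d^2\,\nu_w$ from Theorems~\ref{wtautau*} and~\ref{tauw*}.
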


By Theorem \ref{wtautau*},  Conjecture \ref{tauw*nuw}, if true,
would imply the separated case of the following:

\begin{conjecture}\label{d2w} In a hypergraph of  $d$-intervals
  $\tau_w \le d^2\nu_w$.
\end{conjecture}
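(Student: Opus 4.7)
The plan is to bootstrap from results already stated in the paper. By Theorem~\ref{wtautau*} we have $\tau_w(H) \le d\,\tau^*_w(H)$ for any hypergraph of $d$-intervals, so it would suffice to prove $\tau^*_w(H) \le d\,\nu_w(H)$. In the separated case this is exactly Conjecture~\ref{tauw*nuw}, and the authors have already pointed out that the separated part of Conjecture~\ref{d2w} would follow immediately by composition. For general (non-separated) $d$-intervals, however, $\tau^*_w$ can exceed $d\,\nu_w$ even unweighted, as Gy\'arf\'as's example with $\tau^* \ge \tfrac{11}{4}$ for intersecting $2$-intervals shows, so one cannot route through $\tau^*_w$. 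Instead, I would reduce the non-separated case to the separated one by a discretization analogous to the step in \cite{kaiser} that takes the constant $d^2-d$ in the separated case of Theorem~\ref{t:kaiser} up to $d^2-d+1$ in general: subdivide the line at all component endpoints and, on each subinterval, assign components to one of $d$ parallel lines by their rank. Since the target bound is $d^2$ rather than the sharper $d^2-d$, the slack easily absorbs the small loss.

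To attack Conjecture~\ref{tauw*nuw} itself, I would try to adapt the proof scheme of \cite{kaiser} to weighted edges. The unweighted argument reduces the geometric problem to a discrete problem on $d$-partite $d$-uniform hypergraphs and invokes the topological inequality $\tau^* \le d\,\nu$ there. The naive lift — blowing up each edge $e$ into $w(e)$ parallel copies and applying the unweighted result — fails, because parallel copies are pairwise non-disjoint and so $\nu$ of the blown-up hypergraph overcounts $\nu_w$ of the original. A cleaner route is to re-establish the discrete inequality directly in weighted form, i.e.\ to prove that for any $d$-partite $d$-uniform hypergraph $K$ with an edge-weight system $w$, one has $\tau^*_w(K) \le d\,\nu_w(K)$. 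Once that discrete weighted inequality is in hand, Kaiser's reduction transfers it verbatim to separated $d$-intervals, proving Conjecture~\ref{tauw*nuw} and therefore the separated case of Conjecture~\ref{d2w}; composition with the non-separated reduction described above then finishes the general case.

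The main obstacle is precisely this weighted topological step on $d$-partite $d$-uniform hypergraphs. LP duality alone only equates $\tau^*_w$ with $\nu^*_w$, not with the integral $\nu_w$, and the simple multiplicity-based reduction to the unweighted setting breaks disjointness. A delicate workaround would be to approximate $w$ by rational, then integral, weights and to replace each edge by $w(e)$ parallel copies indexed by disjoint fresh coordinates in the partition, so that a matching in the blown-up hypergraph descends to a weighted matching of the prescribed total weight in the original. Making such a subdivision compatible with the topological input driving $\tau^* \le d\,\nu$ — so that the colorful Helly/KKM-type argument still produces a cover of the right size after the blow-up — is the essential difficulty, and is what distinguishes the weighted conjecture from its proved unweighted counterpart.
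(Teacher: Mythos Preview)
The statement is a \emph{conjecture}; the paper does not prove it. The only case the paper settles is the intersecting one ($\nu(H)=1$), via the one-line observation immediately following the statement: Theorem~\ref{t:kaiser} supplies an ordinary cover of size at most $d^2-d+1$, and placing weight $\nu_w=\max_{h}w(h)$ on each of its points gives a $w$-cover of total weight at most $(d^2-d+1)\nu_w\le d^2\nu_w$. Beyond that special case there is no proof in the paper to compare your attempt to.

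Your write-up is likewise not a proof but a program, and you say so yourself: the entire scheme bottoms out in a weighted analogue of $\tau^*\le d\nu$ for $d$-partite $d$-uniform hypergraphs, which you correctly flag as ``the essential difficulty'' and leave open. That step is equivalent to Conjecture~\ref{tauw*nuw}, so what you have is a restatement of the implication the paper already records (Conjecture~\ref{tauw*nuw} plus Theorem~\ref{wtautau*} would give the separated case of Conjecture~\ref{d2w}), not progress on it.

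There is also a concrete gap in your non-separated-to-separated reduction. The passage from $d^2-d$ to $d^2-d+1$ in Theorem~\ref{t:kaiser} is not obtained by mapping a general family to a separated one; it comes from the two versions of F\"uredi's theorem (the auxiliary hypergraph is $d$-partite in the separated proof, giving $\nu\ge\nu^*/(d-1)$, and merely $d$-uniform in the general proof, giving $\nu\ge\nu^*/(d-1+1/d)$). Your proposed rank-assignment map does not control $\nu_w$ in the needed direction: take $e=[1,2]\cup[4,5]$ and $f=[3,6]$, which intersect, yet after sending the $j$-th component of each to line $j$ one gets $e'=([1,2],[4,5])$ and $f'=([3,6],\emptyset)$, which are disjoint. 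Hence matchings can grow under your map, so a bound $\tau_w(H')\le d^2\nu_w(H')$ for the separated image $H'$ does not transfer back to $H$.
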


This conjecture is true for intersecting hypergraphs, since by
Theorem~\ref{t:kaiser}, there exists a cover of size at most
$d^2-d+1$, and putting weight $\max_{h \in H}w(h)=\nu_w$ on each of
its points constitutes a weighted cover.

In Section \ref{topfrac} we shall show that the case of the weight
of each $d$-interval  being its length has a special stature, namely
that in order to prove an upper bound of $\alpha d^2$ on
$\frac{\tau_w}{\nu_w}$, it is enough to prove that in every
hypergraph $H$ of $d$-intervals
$$\frac{\tau^*_{\ell}}{\nu_{\ell}} \le \alpha d,$$ where $\ell(h)$ is the length of $h\in H$.
The proof uses the KKMS  (Knaster-Kuratowski-Mazurkewitz-Shapley) theorem.
 In Section \ref{sec.tardos-kaiser} we shall use the same theorem, and a generalization of it due to Komiya,  to provide a
simplified proof of Theorem \ref{t:kaiser}. In Section
\ref{toptau*nu} we shall use the KKMS theorem to prove an upper
bound of 4d on $\frac{\tau^*}{\nu}$.
This result is weaker than Theorem  \ref{nutau*}, but we think it is still worth presenting, since an improved
topological approach may well be the right tool for
finding the right bound.

\section{Examples for  sharpness}

If true, then Conjectures \ref{conj:tau*nu} and \ref{tauw*nuw} are
sharp for all $d$. In the next example $\nu=1$ and $\tau^*=d$:

\begin{example}\label{example}
It is known that the edge set of $K_{2d}$ is decomposable into $d$
Hamiltonian paths $P_1, \ldots ,P_d$ (see \cite{stanton}). Let
$V(K_{2d})=[2d]$. Each path $P_j$ can be viewed as a permutation
$\pi_j$ on $[2d]$. Let $H=\{e_1, \ldots ,e_{2d}\}$, where $e_i$ is the
separated interval whose $j$-th component is
$e^j_i=[\pi^{-1}_j(i), \pi^{-1}_j(i)+1]$.  For every $k,\ell \in [2d]$ the
edge $(k,\ell)$ of $K_{2d}$ belongs to some path $P_j$, namely
$k=\pi_j(i),~\ell=\pi_j(i+1)$ or $\ell=\pi_j(i),~k=\pi_j(i+1)$ for
some $i$. This implies that $e_k$ and $e_\ell$ meet in the $j$-th
line. Thus $\nu(H)=1$. Putting weight $\frac{1}{2}$ on each $e_i$
yields a fractional matching of size $d$. The set of points $\{2k\mid 1\le k\le d\}$ on the first line constitutes a cover of $H$ of size $d$. This proves that $\tau(H)=\tau^*(H)=d$.

\end{example}

The bound given in Theorem
\ref{nu*tau} is  also  sharp, at least asymptotically, even for
separated $d$-intervals. This is shown by the following example.
 Let $n$ be an integer,
let $[0,1]^{\cup d}$ denote the union of $d$ disjoint copies of the
unit interval, and take $H$ to
be the set of $d$-intervals $e \subseteq [0,1]^{\cup d}$ satisfying $|e|
> \frac{1}{n}$, where $|f|$ denotes the total length of a $d$-interval $f$, namely the sum of the lengths of its components.

\begin{assertion}
$\tau(H) \ge nd^2-d$.
\end{assertion}
\begin{proof}
Let $C$ be a cover for $H$. Let $C_i$ be obtained from the
intersection of $C$ with the $i$-th line by the addition of $0$ and
$1$, and let $\ell_i$ be the longest distance between two
consecutive points of $C_i$. Then, clearly, $\sum_{i \le d}\ell_i\le
\frac{1}{n}$, and the intersection of $C$ with the $i$-th line
contains at least $\frac{1}{\ell_i}-1$ points from $C$. The latter
means that $|C| \ge \sum_{i \le d} \frac{1}{\ell_i} -d$. By the
harmonic-arithmetic average inequality, $\sum_{i \le
d}\frac{1}{\ell_i} \ge \frac{d^2}{\sum_{i \le d}\ell_i}\ge nd^2$,
and hence $|C| \ge nd^2-d$.
\end{proof}

\begin{assertion} $\nu^*(H) \le nd$. \end{assertion}
\begin{proof}
Let $\alpha$ be a fractional matching in $H$. Denoting the value of
$\alpha$ on an edge $e$ by $\alpha_e$, and the characteristic function
of $e$ by $I_e$, we have
$$d=|[0,1]^{\cup d}| \ge \int \sum \alpha_e I_e = \sum \alpha_e \int
I_e = \sum \alpha_e |e|,$$ where the integration is over $[0,1]^{\cup
  d}$. Since $|e| > \frac{1}{n}$ for all $e\in H$ it follows that
$\sum \alpha_e < nd$. Alternatively, the constant function $n$
constitutes a `continuous fractional cover' of $H$ of size $nd$, and
it can be approximated by discrete fractional covers as well as we
please.
\end{proof}

By the above, for every $n$ there exists a $d$-interval hypergraph
with $\frac{\tau}{\nu^*} \ge d-\frac{1}{n}$.


\section{A weighted version of Tur\'{a}n's theorem}
\label{sec.turan}

In this section we prove a weighted version of Tur\'{a}n's theorem.
Let $G=(V,E)$ be a graph.  Recalling notation defined above, for a set $X$ of vertices  $w[X]$ is  $\sum_{x
  \in X}w(x)$.  For a set $F$ of edges let $\tilde{w}[F]=\sum_{uv \in
  F}(w(u)+w(v))$.  If $G$ is directed, given disjoint subsets $A$ and $B$ of $V$, we write
$E(A,B)=\{xy \mid x \in A, y \in B\}$, where $xy$ is an edge directed from $x$ to $y$. If $v \in V\setminus A$ we
write $E(v,A)$ and $E(A,v)$ for $E(\{v\},A)$ and $E(A,\{v\})$, respectively.
The set of neighbors of a vertex $v$ is
$N(v)=\{u \mid uv\in E ~or~ vu \in E\}$. We let $\alpha_w(G)$ denote the maximal sum of weights on an
independent set in $V$.

\begin{theorem}\label{wturan}
Let $G=(V,E)$ be a graph and let $w:~V \to \mathbb{N}^+$ be a weight
function on its vertices. Let $W=w[V]$ and $K=\alpha_w(G)$.
 Then
\begin{equation}\tilde{w}[E] \ge \frac{W^2}K-W.
\label{wt}
\end{equation}
\end{theorem}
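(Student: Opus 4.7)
The inequality is the weighted analogue of the classical Tur\'an bound $2|E(G)|\ge n^2/\alpha(G)-n$, and the plan is to mimic the classical derivation: a weighted Caro--Wei inequality, followed by Cauchy--Schwarz, followed by a double-count. I would work with the underlying undirected graph (merging antiparallel edges if present); the directed case then follows \emph{a fortiori}, since $\tilde{w}[E]$ only grows when antiparallel copies are added while the right-hand side of \eqref{wt} is unchanged.

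First I would establish the weighted Caro--Wei bound
\begin{equation*}
K \;=\; \alpha_w(G) \;\ge\; \sum_{v\in V}\frac{w(v)}{d(v)+1},
\end{equation*}
where $d(v)=|N(v)|$, by the standard random-order argument: sample a uniformly random linear order $\pi$ on $V$ and let $S$ consist of those $v$ that precede every element of $N(v)$ in $\pi$. Then $S$ is independent, because for each edge $uv$ the later of its endpoints in $\pi$ is excluded from $S$; and $\Pr[v\in S]=1/(d(v)+1)$, so $\mathbb{E}[w[S]]=\sum_v w(v)/(d(v)+1)$, which is at most $K$.

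Next I would invoke the Cauchy--Schwarz inequality in the form $\sum_i a_i^2/b_i\ge(\sum_i a_i)^2/\sum_i b_i$, applied with $a_v=w(v)$ and $b_v=w(v)(d(v)+1)$, to obtain
\begin{equation*}
\sum_{v\in V}\frac{w(v)}{d(v)+1}\;\ge\;\frac{W^2}{\sum_v w(v)(d(v)+1)}\;=\;\frac{W^2}{W+\sum_v w(v)d(v)}.
\end{equation*}
Combining this with the Caro--Wei bound and rearranging gives $\sum_v w(v)d(v)\ge W^2/K-W$. Finally, a straightforward double-count over endpoints supplies the identity
\begin{equation*}
\sum_{v\in V}w(v)d(v)\;=\;\sum_{uv\in E}\bigl(w(u)+w(v)\bigr)\;=\;\tilde{w}[E],
\end{equation*}
which completes the proof of \eqref{wt}. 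I do not foresee a serious obstacle here, as all three ingredients are classical; the only point worth flagging is the reduction to the underlying undirected simple graph mentioned above.
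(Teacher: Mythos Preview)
Your proof of the undirected statement is correct and pleasantly short: the weighted Caro--Wei bound via the random-ordering argument, followed by Cauchy--Schwarz in Engel form and the endpoint double-count, gives \eqref{wt} directly. The paper takes a different route: it proves a directed refinement (Theorem~\ref{wtrefined}) by induction, choosing a maximum-weight independent set $A$, applying the induction hypothesis to $B=V\setminus A$, and establishing for each $v\in B$ the local inequality $\sum_{uv\in E(A,v)}w(u)+\sum_{vu\in E(v,A)}w(v)\ge 2w(v)$; the undirected theorem then falls out by replacing each edge by two oppositely directed arcs. What the paper's detour buys is precisely the directed statement, which is what is actually invoked in the proof of Theorem~\ref{tauw*}: there the auxiliary digraph on $E(H)$ may well have two parallel arcs in the \emph{same} direction between a pair of vertices (when both endpoints of one interval component lie in the other). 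Your aside that ``the directed case then follows \emph{a fortiori}'' is not right: if $u$ and $v$ are joined by two arcs both oriented $u\to v$, the tail-weight sum contributes $2w(u)$, which may be strictly smaller than $w(u)+w(v)$, so the undirected bound does not imply Theorem~\ref{wtrefined}. Thus your argument settles Theorem~\ref{wturan} by a genuinely different and more elementary method, but would need an additional idea to yield the directed version that the paper relies on downstream.
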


Note that if all  weights are  1, this is just Tur\'{a}n's
theorem. We will actually need a result about directed graphs that
implies Theorem~\ref{wturan} as an easy corollary, upon replacing every edge in $G$ by two oppositely directed edges:

\begin{theorem}\label{wtrefined}
  Let $D$ be a directed graph in which every pair of adjacent vertices
  is connected by at least two directed edges, not necessarily in the
  same direction. Let $w$ be a weight function on $V=V(D)$, and write
  $W = w[V]$ and $K=\alpha_w(D)$. Then
  $$\sum_{\ xy \in E(D)}w(x) \ge \frac{W^2}K - W.$$
\end{theorem}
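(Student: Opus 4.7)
I would prove Theorem~\ref{wtrefined} by strong induction on the total weight $W=w[V]$. The base case $W=1$ reduces to a single isolated vertex, for which both sides of the inequality equal~$0$.

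For the inductive step with $W\ge 2$, pick a vertex $v_0$ and pass to a smaller instance by either (i) decrementing $w(v_0)$ by $1$ when $w(v_0)\ge 2$, or (ii) deleting $v_0$ together with all incident arcs when $w(v_0)=1$. In either case the new total weight is $W'=W-1$ and the new independence parameter $K'$ satisfies $K'\in\{K-1,K\}$. Applying the inductive hypothesis to the reduced instance and re-expressing in the original digraph yields
\[
\sum_v w(v)\,d^+(v)\ \ge\ \frac{W'^2}{K'}-W'+\Delta,
\]
where the margin $\Delta\ge 0$ equals $d^+(v_0)$ in case~(i) and $d^+(v_0)+\sum_{u}w(u)\,m(u\to v_0)$ in case~(ii); here $m(u\to v_0)$ denotes the number of arcs from $u$ to $v_0$.

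It thus suffices to verify $W'^2/K'-W'+\Delta\ge W^2/K-W$. In the subcase $K'=K-1$ (which holds, for instance, whenever $v_0$ is isolated or lies in every maximum weight independent set), a short algebraic manipulation gives
\[
\frac{(W-1)^2}{K-1}-\frac{W^2}{K}+1\ =\ \frac{(W-K)^2}{K(K-1)}\ \ge\ 0,
\]
so the step closes automatically regardless of the value of $\Delta$. In the subcase $K'=K$, there exists a maximum weight independent set $I^*\not\ni v_0$, and the step reduces to the requirement $\Delta\ge (2W-K-1)/K$. Here the hypothesis of the theorem is essential: since $I^*\cup\{v_0\}$ cannot beat $I^*$, the vertex $v_0$ has a neighbor in $I^*$, and each such neighbor contributes at least $2$ to the margin via the bound $m(v_0\to u)+m(u\to v_0)\ge 2$ combined with $w(u)\ge 1$.

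The main obstacle lies in the subcase $K'=K$ when $W$ is large compared to $K$, where the threshold $(2W-K-1)/K$ exceeds the trivial lower bound $\Delta\ge 2$. I would resolve this by an averaging argument over the vertices of $V\setminus I^*$: using the double-arc hypothesis to lower-bound the total weighted incidence $\sum_{v\notin I^*}w(v)\bigl(d^+(v)+\sum_u w(u)\,m(u\to v)\bigr)$, one shows that some vertex $v_0\notin I^*$ achieves a margin above $(2W-K-1)/K$, with a possible refinement in which one decrements a weight-$\ge 2$ vertex instead of removing a weight-$1$ vertex. Carrying out this averaging step cleanly, and selecting the appropriate $v_0$, is the most delicate part of the proof.
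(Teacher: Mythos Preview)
Your induction scheme—peeling off one unit of weight at a time—differs from the paper's, and the place where you say ``carrying out this averaging step cleanly \dots\ is the most delicate part of the proof'' is precisely where the argument is left unfinished. In the subcase $K'=K$ you need some $v_0\notin I^*$ whose margin $\Delta$ reaches $(2W-K-1)/K$; the quantity you propose to average, $\sum_{v\notin I^*} w(v)\bigl(d^+(v)+\sum_u w(u)\,m(u\to v)\bigr)$, is not the margin in either of your two cases (it mixes the case~(i) margin $d^+(v_0)$ with extra in-weight terms that only count in case~(ii), and it is $w$-weighted whereas case~(i) is not), so even if this sum is large it does not obviously produce a single $v_0$ with the required margin for the actually applicable case. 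As it stands, this is a genuine gap rather than a routine detail.

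The paper avoids this difficulty by inducting differently: instead of removing one unit of weight, it removes the \emph{entire} maximum-weight independent set $A$ at once and applies the induction hypothesis to $D[B]$ with $B=V\setminus A$ (using $\alpha_w(D[B])\le K$). The cross contribution is then handled locally: for each $v\in B$ one shows
\[
\sum_{uv\in E(A,v)} w(u)\;+\;\sum_{vu\in E(v,A)} w(v)\;\ge\;2w(v),
\]
using the maximality of $A$ (which gives $\sum_{u\in A\cap N(v)} w(u)\ge w(v)$) together with the double-arc hypothesis via a short case analysis on how many $v\to A$ arcs exist. Summing over $v\in B$ yields exactly $2w[B]$, and since $w[A]=K$ the algebra $\frac{W^2}{K}-W=\frac{w[B]^2}{K}+w[B]$ closes the induction with no averaging or vertex selection needed. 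If you want to salvage your one-step-at-a-time approach, the clean fix is to remove all of $A$ in a single inductive step rather than hunting for a good $v_0$.
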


\begin{proof}
Choose an independent set $A$ of total weight $K$.
Write $B=V\setminus A$. By the induction hypothesis
\begin{equation}\label{inductionb}
  \sum_{xy \in E(D[B])} w(x)\ge
 \frac{w[B]^2}K - w[B]. \end{equation}

   Let $v$ be a vertex not in $A$. By the maximality property of $A$,
  we have
  \begin{equation}\label{eq:max}
     \sum\{w(u) \mid  u\in A \cap N(v)\} \ge w(v),
    \end{equation}
  for otherwise the sum of weights in $A \setminus N(v) + v$ would be
  larger than in $A$. In particular, note that $|A \cap N(v)| \ge 1$.

  We claim that

\begin{equation}\label{2wv} \sum_{uv \in E(A,v)}w(u)+\sum_{vu \in E(v,A)} w(v) \ge
2w(v). \end{equation}
Indeed, if all $A-v$ edges are directed from $A$ to $v$, this follows from (\ref{eq:max}). If there exist
two (possibly parallel) edges directed from $v$ to $A$ then
\eqref{2wv} is obvious. So, there remains the case that there is
precisely one $A-v$ edge, say $va$, directed from $v$ to $A$. Then
$$\sum_{uv \in E(A,v)}w(u)+\sum_{vu \in E(v,A)} w(v) = 2\sum\{w(u) \mid  u\in A \cap N(v)\}-w(a)+w(v).$$ Therefore if $w(a)\le w(v)$ then \eqref{2wv} is proved by \eqref{eq:max}. Else we have $$\sum_{uv \in E(A,v)}w(u)+\sum_{vu \in E(v,A)} w(v) \ge w(v) + w(a) > 2w(v),$$ as claimed.

     Hence

     \begin{equation}\label{2wb} \sum_{uv \in E(A,B)}w(u)+\sum_{vu \in E(B,A)} w(v) \ge 2w[B]. \end{equation}

  Using \eqref{inductionb} and \eqref{2wb}, we get:
  \begin{equation}
    \label{conclusion}
    \sum_{\ xy \in E(D)}w(x) \ge  \frac{w[B]^2}K - w[B] + 2w[B]=\frac{w[B]^2}{K} +
    w[B].
  \end{equation}

  Since $W=w[A]+w[B]$ and $w[A] = K$, we have
  \begin{align*}
    \frac{W^2}{K}-W &= \frac{w[A]^2+2w[A]w[B]+w[B]^2}{K}-w[A]-w[B]\\
    &=\frac{w[B]^2}{K} + w[B].
  \end{align*}
  Together with \eqref{conclusion} this proves the desired inequality.

\end{proof}


\section{Weighted matchings}\label{weightedalon}
In this section we prove Theorems \ref{tauw*} and \ref{wtautau*}. But
before that, here is some
 motivation to the study of the weighted case:  a
(well known) connection to edge-colorings. The {\em edge chromatic
  number} $\chi_e(H)$ of a hypergraph $H$ is the minimal number of
matchings needed to cover all edges of the hypergraph.  A {\em
  fractional edge coloring} is a non-negative function $f$ on the set
$\M$ of matchings in $H$, satisfying the condition that $\sum_{e \in M
  \in \M} f(M)\ge 1$ for all $e \in E(H)$.  The fractional edge
chromatic number $\chi^*_e(H)$ is the minimum, over all fractional
edge colorings $f$ of $H$, of $f[\M]$. (It is
easy to see that this minimum exists.)

\begin{lemma}\label{fracedgechromaticnuw}
For any  hypergraph $H=(V,E)$, if $\tau^*_w(H) \le \alpha \nu_w(H)$
for every weight system $w$ on $E(H)$ then $\chi^*_e(H) \le
\alpha\Delta(H)$.

\end{lemma}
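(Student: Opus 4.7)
The plan is to translate $\chi^*_e(H)$ into a fractional weighted matching quantity via LP duality, and then invoke the hypothesis. Dualising the defining LP of $\chi^*_e(H)$ (primal: minimise $\sum_M f(M)$ subject to $\sum_{M\ni e} f(M)\ge 1$, $f\ge 0$) gives
\[
  \chi^*_e(H) \;=\; \max\bigl\{\,y[E] \;:\; y: E(H)\to\mathbb{R}_{\ge 0},\ \nu_y(H)\le 1\,\bigr\},
\]
where $\nu_y(H):=\max_M y[M]$ is the obvious real-valued extension of $\nu_w$. Fix an optimal $y$. Since the feasible polytope is cut out by rational inequalities, an optimal $y$ may be chosen with rational entries; after clearing denominators by a factor $k\in\mathbb{N}$, the integer weight system $w:=ky$ satisfies $\nu_w(H)\le k$. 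The hypothesis then yields $\tau^*_w(H)\le\alpha\nu_w(H)\le\alpha k$, and homogeneity of $\tau^*_{(\cdot)}$ in the weights gives $\tau^*_y(H)\le\alpha$.

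The heart of the argument is the elementary inequality $y[E]\le\Delta(H)\cdot\tau^*_y(H)$. To prove it, I would dualise once more: by LP duality, $\tau^*_y(H)$ equals the fractional weighted matching number $\nu^*_y(H)$, defined as the maximum of $\sum_e y(e)x(e)$ over $x\ge 0$ with $\sum_{e\ni v} x(e)\le 1$ for every $v$. Now exhibit the constant assignment $x(e):=1/\Delta(H)$ as feasible: for every vertex $v$,
\[
  \sum_{e\ni v} x(e) \;=\; \frac{\deg(v)}{\Delta(H)} \;\le\; 1.
\]
This feasible $x$ has objective value $y[E]/\Delta(H)$, so $\nu^*_y(H)\ge y[E]/\Delta(H)$, i.e.\ $y[E]\le\Delta(H)\cdot\tau^*_y(H)$. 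Combining with the bound from the previous paragraph,
\[
  \chi^*_e(H) \;=\; y[E] \;\le\; \Delta(H)\cdot\tau^*_y(H) \;\le\; \alpha\,\Delta(H),
\]
which is the desired conclusion.

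No step is technically difficult. The only mildly subtle point is reconciling the $\mathbb{N}$-valued hypothesis with the real-valued $y$ produced by LP duality, which is handled by the rationality of LP optima and the fact that both $\tau^*_w$ and $\nu_w$ scale linearly in $w$.
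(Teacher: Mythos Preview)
Your proof is correct and follows essentially the same route as the paper's: LP duality identifies $\chi^*_e(H)$ with $\max\{w[E]:\nu_w\le 1\}$, the hypothesis bounds $\tau^*_w\le\alpha$, and the elementary inequality $w[E]\le\Delta(H)\,\tau^*_w$ finishes. The only differences are cosmetic---the paper obtains that last inequality directly from the cover side (summing $\sum_{v\in e}g(v)\ge w(e)$ over $e$) rather than via the dual $\nu^*_w$, and glosses over the integrality issue that you handle explicitly by rational scaling.
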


\begin{proof}
Write $q$ for $\chi^*_e(H)$. By LP duality, $q=\max w[E]$, where the
maximum is over all weight functions $w$ on $E$ satisfying the
condition that $w[M]\le 1$ for every matching $M$ in $H$. Taking $w$
for which this maximum is attained, we have $\nu_w(H) = 1$, and
hence by the assumption  of the lemma, $\tau^*_w(H) \le \alpha$. But
since every vertex participates in at most $\Delta(H)$ edges in a
fractional $w$-covering of the edges, $\tau^*_w \ge
\frac{w[E]}{\Delta(H)}=\frac{q}{\Delta(H)}$. Combining these
inequalities yields  $q \le \alpha \Delta(H)$, as desired.
\end{proof}

In \cite{gl2} the conjecture was raised that in a hypergraph $H$ of  $2$-intervals $\chi_e(H)$ does not exceed
$2$ times the maximal size of an intersecting subhypergraph. The following conjecture strengthens this, and generalizes it to all $d$:

\begin{conjecture}\label{conj:edgecoloring}
If $H$ is a hypergraph of  $d$-intervals, and the maximum
degree of a point on any line is $\Delta$, then $\chi_e(H) \le
d\Delta$.
\end{conjecture}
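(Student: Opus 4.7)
The plan is to attack the conjecture through its fractional counterpart. Lemma \ref{fracedgechromaticnuw} reduces the bound $\chi^*_e(H) \le d\Delta(H)$ to the assertion that $\tau^*_w(H) \le d\nu_w(H)$ for every weight system $w$ on $E(H)$, which is exactly Conjecture \ref{tauw*nuw}. So the first step is to establish that conjecture, at least in the separated case. A natural route is to adapt the KKMS approach of Section \ref{topfrac}, which handles the length-weight case, to general integer weights: one would design a covering of a suitable simplex whose defining parameters reflect $w$, and apply a Knaster-Kuratowski-Mazurkewitz-Shapley-type theorem to extract a small $w$-cover. A related, more combinatorial route is to ``split'' each edge into $w(e)$ nearly coincident copies and invoke the unweighted bound; but the duplicates overlap, and the topological argument must handle this with care. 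For the non-separated case Conjecture \ref{tauw*nuw} fails, so the best fractional bound attainable via Lemma \ref{fracedgechromaticnuw} and Theorem \ref{tauw*} is only $\chi^*_e(H) \le 2d\Delta(H)$; achieving the sharp fractional bound there would itself be new.

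The second step is to pass from the fractional bound $\chi^*_e \le d\Delta$ to the integer bound $\chi_e \le d\Delta$. In general the integrality gap between these two quantities can be arbitrarily large, so this step must exploit the geometric structure of hypergraphs of $d$-intervals. Two tactics suggest themselves: (i) a rounding argument converting an optimal fractional edge colouring of the intersection graph of $H$ into an integer colouring with little or no loss, perhaps modelled on Kahn's asymptotic edge-colouring theorem for line graphs of multigraphs; or (ii) a direct greedy/structural argument that orders the $d$-intervals so that each one creates at most $d\Delta - 1$ conflicts with previously processed edges, giving the bound by greedy colouring.

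I expect the main obstacle to be closing the integrality gap. For $d=1$ the intersection graph is an interval graph, hence perfect, and the conclusion follows at once from K\"onig's theorem. For general $d$, however, the intersection graph of a hypergraph of $d$-intervals is neither perfect nor known to be ``near-perfect'' in any useful sense, so converting a clean fractional bound (assuming Conjecture \ref{tauw*nuw}) into the sharp integer statement $\chi_e(H) \le d\Delta$ appears to demand genuinely new ideas beyond what is present in this paper. A backup plan, should the integrality step resist, would be to prove instead the weaker statement $\chi_e(H) \le c\,d\Delta$ for some absolute constant $c$, which would already strengthen the known bounds and could plausibly be obtained by combining Corollary \ref{2d2} with a multiplicative rounding scheme.
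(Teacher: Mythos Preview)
The statement you are trying to prove is presented in the paper as an open \emph{conjecture}; the paper does not prove it. What the paper does prove is the weaker bound $\chi_e(H)\le 2d(\Delta-1)$, and by a method entirely different from your plan: it observes that in the intersection digraph each $d$-interval has outdegree at most $2d(\Delta-1)$, so the underlying undirected intersection graph is $(2d(\Delta-1)-1)$-degenerate, and a greedy colouring finishes. No fractional parameters, no KKMS, no rounding.

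Your proposal therefore has a genuine gap at both steps. Step one asks you to ``establish'' Conjecture~\ref{tauw*nuw}, but that conjecture is open in the separated case and, as the paper notes after Conjecture~\ref{conj:tau*nu}, its unweighted special case already \emph{fails} for non-separated $2$-intervals; so the non-separated half of your plan cannot work via Lemma~\ref{fracedgechromaticnuw}, and the separated half would itself be a new theorem. Step two, passing from $\chi_e^*$ to $\chi_e$ without loss, is precisely the obstacle you yourself flag: intersection graphs of $d$-intervals are not perfect for $d\ge 2$, and nothing in the paper (or the literature cited) supplies a rounding mechanism that closes this gap. Your backup plan of proving $\chi_e\le c\,d\Delta$ for some constant $c$ is already delivered, with $c=2$, by the paper's degeneracy argument above --- and that argument is far simpler than anything involving Corollary~\ref{2d2} or KKMS.
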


If true, then this is sharp by Example \ref{example}. There $\Delta=2$, and since $\nu=1$ we have $\chi_e=|E|=2d$.

As noted already in~\cite{gl2}, relaxing the problem by a factor of $2$
brings Conjecture \ref{conj:edgecoloring} within easy reach:

\begin{theorem} Under the assumptions of
  Conjecture~\ref{conj:edgecoloring}, $\chi_e(H) \le 2d(\Delta-1)$.
\end{theorem}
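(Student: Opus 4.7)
Since $\chi_e(H)$ equals the vertex chromatic number of the intersection graph $L(H)=L_1\cup\dots\cup L_d$, where each $L_j$ is the 1-interval intersection graph on line $j$, and since interval graphs are perfect with $\chi(L_j)=\omega(L_j)\le\Delta$, my plan begins from the classical greedy $\Delta$-coloring of each $L_j$: processing the 1-intervals on line $j$ in order of increasing left endpoint and assigning the smallest available color from $[\Delta]$ yields a proper coloring $c_j\colon A_j\to[\Delta]$, where $A_j=\{h\in E(H):h^j\ne\emptyset\}$, such that any two $d$-intervals with the same $c_j$-value have disjoint $j$-components. This is precisely the $d=1$ case of the edge-coloring conjecture and is the tool we build on.

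The main task is then to aggregate the per-line colorings $c_1,\dots,c_d$ into a decomposition of $E(H)$ into at most $2d(\Delta-1)$ matchings of $H$. My plan is to partition the color palette into $d$ line-blocks of size $2(\Delta-1)$ each, one block per line, so that each $d$-interval $h$ is assigned a color from the block associated with an anchor line $j^*(h)\in T(h)$, the color within that block being determined by $c_{j^*(h)}(h)$ together with a secondary binary choice (``left/right endpoint of $h^{j^*(h)}$'') that plays the role of the $+1$ in a Vizing-type argument. The matching property for each colour class would follow from the choice of $j^*(h)$: every conflict of $h$ must be witnessed either on $h$'s own anchor line --- where same-colored intervals are automatically disjoint under $c_{j^*(h)}$ --- or on the conflicting interval's anchor line. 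Adding up $d\cdot 2(\Delta-1)$ yields the desired bound $2d(\Delta-1)$ on $\chi_e(H)$.

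The principal obstacle is the anchor assignment $h\mapsto(j^*(h),s^*(h))$: a given $d$-interval may conflict with different neighbors on different lines, and a single anchor line need not suffice for all of them, so the binary side choice is essential in providing enough flexibility. A secondary issue is the verification that the resulting color classes are matchings of the whole hypergraph $H$ (that is, pairwise disjoint on \emph{all} lines, not just on the anchor line), and in handling $d$-intervals with $|T(h)|=1$ for which the anchor is forced. The factor of $2$ in the bound $2d(\Delta-1)$, compared to the $d\Delta$ of the unrelaxed conjecture, is precisely the price paid for this extra side-of-endpoint flexibility --- in spirit the same extra factor one pays when moving from $\chi = \omega$ for interval graphs to $\chi_e\le\Delta+1$ for multigraphs in Vizing's theorem.
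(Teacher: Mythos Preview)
Your proposal is not a proof but a plan, and the plan has a genuine gap precisely at the point you yourself flag as the ``principal obstacle'': the anchor assignment $h\mapsto (j^*(h),s^*(h))$. You never define it, and the property you need from it --- that every conflict between $h$ and $h'$ is witnessed on the anchor line of at least one of them --- is not something that follows from anything you have set up. Two $d$-intervals can intersect solely on a line $j$ that is the anchor of neither; in that case your per-line colourings $c_{j^*(h)}$ and $c_{j^*(h')}$ say nothing about the conflict, and the binary ``side'' bit gives you only one extra degree of freedom, not enough to handle up to $d-1$ non-anchor lines. There is also an arithmetic mismatch: a colour in block $j^*(h)$ is, by your description, the pair $(c_{j^*(h)}(h),s^*(h))\in[\Delta]\times\{0,1\}$, which is $2\Delta$ values per block and $2d\Delta$ in total, not $2d(\Delta-1)$; you have not explained where the ``$-1$'' comes from.

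The paper's argument is entirely different and avoids any explicit construction. It works on the intersection graph $G=L(H)$ directly and shows it is $2d(\Delta-1)$-degenerate. Orient an arrow from $e$ to $f$ whenever an endpoint of some component of $e$ lies in $f$; since $e$ has at most $2d$ endpoints and each endpoint lies in at most $\Delta-1$ other $d$-intervals, every vertex has outdegree at most $2d(\Delta-1)$. The key observation is that whenever two $d$-intervals meet, each contains an endpoint of the other, so every undirected edge of $G$ carries at least two arrows; hence $|E(G)|\le d(\Delta-1)|V(G)|$, the average degree of every induced subgraph is at most $2d(\Delta-1)$, and greedy colouring in a degeneracy order finishes the job. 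This bypasses the anchor problem altogether: you never need to decide on which line a conflict ``lives''.
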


\begin{proof}
  Let $D$ be a digraph whose vertex set is $E(H)$, and in which $e$
  sends an arrow to $f$ if an endpoint of some interval component of
  $e$ belongs to $f$. Then the outdegree of every vertex of $D$ (=edge
  of $H$) is at most $2d(\Delta-1)$, and hence the number of directed
  edges is at most $2d(\Delta-1)|V|$. As is easy to realize, for every
  pair of vertices of $D$, if there is an edge between them then there
  are two, hence the number of edges in the underlying undirected
  graph $G$ is at most $d(\Delta-1)|V|$, so the average degree in $G$
  is at most $2d(\Delta-1)$. Since the same is true for any induced
  subgraph of $G$, the theorem follows by successively removing
  vertices of minimum degree and coloring them greedily in the reverse
  order.
\end{proof}

By Lemma \ref{fracedgechromaticnuw}, Conjecture \ref{tauw*nuw} would
imply the fractional version of Conjecture \ref{conj:edgecoloring},
namely $\chi^*_e(H) \le d\Delta(H)$, for separated $d$-intervals.

\bigskip

The proof of Theorem \ref{wtautau*} is almost identical to the proof in the non-weighted case:

{\em Proof of Theorem \ref{wtautau*}}.~
Let $g$ be a rational valued fractional $w$-cover of $H$ of minimal size, and let $|g|=\frac{p}{q}$. By duplicating points we may assume that $g$ has value $\frac{1}{q}$ on  each point belonging to a
set $P$ of $p$ points, and that $d | q$. Write
$m=\frac{q}{d}$.
 Let $Q$ be a set obtained by taking every $m$-th point in $P$, in the left to right order on the line. Every  $e\in H$ satisfies $|P \cap e| \ge qw(e)$,
  and hence has a component containing $\frac{qw(e)}{d}$ points from $P$. This component contains then at least $\frac{qw(e)}{dm}=w(e)$ points from $Q$. Thus $Q$ is a $w$-cover, and its size is
  $\frac{p}{m}=\frac{pd}{q}=d\tau_w^*$.
\hfill{$\square$}
\\ \ \\

\emph{Proof of Theorem \ref{tauw*}}. ~~
Write $K=\nu_w(H)$.  We need to show that $\nu_w^*(H)\le 2dK$, meaning that for every fractional matching $f$ we have
$\sum_{e \in H}w(e)f(e) \le 2dK$.
Multiplying by a common denominator, removing edges on which $f=0$
and duplicating edges if necessary, we can assume that $f(e)=\frac{1}{q}$ for all $e
\in H$, for some natural number $q$.  We write $W$ for $w[H]$. In this terminology, we have to show that

\begin{equation}\label{2dk}
\frac{W}{q} \le 2dK.
\end{equation}

 Let $D$ be a digraph obtained from the line graph  $L=L(H)$ by
 duplicating each edge in $E(L)$, and
 directing the two copies of each edge according to the following rule. Suppose that
 two edges $e_1$ and $e_2$ in $H$ meet, namely $e_1e_2 \in E(L)$. Choose
an interval component $c_1$ of $e_1$ that meets an interval
component $c_2$ of $e_2$. It is now possible to choose two pairs
$(x,c_i)$, where $x$ is an endpoint of $c_{3-i}$, and belongs to
$c_i$. For each choice of such a pair direct a copy of the edge
$e_1e_2$ from $e_{3-i}$ to $e_i$ (namely, from the piercing edge to
the pierced one).

Let $\tilde{D}$ be $D$ with all loops $ee$ added. By Theorem
\ref{wtrefined}, $\sum_{e \in H}w(e) deg_D^+(e) \ge W(W-K)/K$ (here
$deg^+$ denotes the outdegree). Dividing both sides by $W$ gives
that the weighted average of $deg_D^+(e)$ (weighted by $w(e)$) is at
least $\frac{W}{K}-1$, and hence the weighted average of
$deg_{\tilde{D}}^+(e)$ is $\frac{W}{K}$. Hence there is an edge
 $a \in H$ with $deg_D^+(a) \ge \frac{W}{K}-1$, and
considering $a$ as adjacent also to itself, its outdegree in
$\tilde{D}$ is at least $\frac{W}{K}$. That is, it pierces by its
endpoints at least this many edges. Since $a$ has at most $2d$ endpoints, it
has an endpoint $x$ meeting at least $\frac{W}{2dK}$ edges. Since $\sum_{e
\in H, ~x \in e}f(e) \le 1$, it follows that $W/(2dKq) \le 1$, proving
\eqref{2dk}. \hfill{$\square$}


\section{KKMS and the special role of lengths}\label{topfrac}

A  natural weight on a $d$-interval is its total length. In this section we  show
that in some sense, to be specified below, this is the  general case.
 The tool showing this is the KKMS theorem (to be given below).

For a hypergraph $H$ define a weight function $\ell=|h|$ for every
$h\in H$. Call \ref{tauw*nuw}L the special case of Conjecture
\ref{tauw*nuw} in which $w=\ell$. Call a hypergraph $H=(V,E)$ {\em
balanced} if it has a perfect fractional matching, namely a weight
system on $E$ such that for each vertex $v$, the weights of the
edges containing $v$ sum up to $1$.

\begin{lemma}\label{discreteweaker}
If a hypergraph $H$ on a vertex set $[k]$ is balanced and
$\tau^*_{\ell}(H) \le \alpha \nu_{\ell}(H)$ then $H$ contains a
matching $M$ such that $\sum_{m \in M}|m| \ge \frac{k}{\alpha}$.
\end{lemma}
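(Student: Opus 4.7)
The plan is to exploit the balanced hypothesis via LP duality; no topology is needed for this particular statement. First I would unpack the definition of ``balanced'': there is a fractional matching $x \colon E(H) \to [0,1]$ satisfying $\sum_{h \ni v} x(h) = 1$ for every $v \in [k]$. Swapping the order of summation gives
\[ \sum_{h \in E(H)} x(h)\, |h| \;=\; \sum_{h \in E(H)}\sum_{v \in h} x(h) \;=\; \sum_{v \in [k]} \sum_{h \ni v} x(h) \;=\; k, \]
so $x$ is a fractional matching of $\ell$-weight exactly $k$. This shows $\nu^{*}_{\ell}(H) \ge k$.

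Next, LP duality (already invoked in the Introduction as $\nu^{*}=\tau^{*}$, and whose weighted version is immediate) yields $\tau^{*}_{\ell}(H) = \nu^{*}_{\ell}(H) \ge k$. Combining with the hypothesis $\tau^{*}_{\ell}(H) \le \alpha\, \nu_{\ell}(H)$ gives
\[ \nu_{\ell}(H) \;\ge\; \frac{k}{\alpha}. \]
Since $\nu_{\ell}(H)$ is by definition the maximum of $\sum_{m \in M} |m|$ over integral matchings $M$ in $H$, we can pick an $M$ realizing this maximum and obtain $\sum_{m \in M} |m| \ge k/\alpha$, as required.

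The only real content is recognizing that in a balanced hypergraph the $\ell$-weight of any perfect fractional matching is forced, by a double-counting swap, to equal the number of vertices $k$; once this is observed, the conclusion follows purely from LP duality plus the stated hypothesis. I do not anticipate any genuine obstacle, and in particular the KKMS theorem advertised in the section title is presumably reserved for the deeper reduction that \emph{uses} this lemma (turning bounds for $\ell$-weights into bounds for arbitrary weight systems) rather than for the lemma itself.
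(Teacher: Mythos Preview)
Your argument is correct and matches the paper's own proof essentially line for line: use the perfect fractional matching to see that $\nu^*_\ell(H)\ge k$, invoke LP duality to get $\tau^*_\ell(H)\ge k$, and combine with the hypothesis to conclude $\nu_\ell(H)\ge k/\alpha$. The only difference is that you spell out the double-counting swap explicitly, which the paper compresses into a single displayed equation.
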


\begin{proof}
Let $f$ be a perfect fractional matching on $H$, then $$\sum_{h\in
H} f(h)|h| = k.$$ Therefore, $$\sum_{h\in H} f(h)\ell(h) = k,$$
which implies that $\tau^*_{\ell}=\nu^*_{\ell}(H)\ge k$. Thus we
have $\nu_{\ell}(H) \ge \frac{k}{\alpha}$, which proves the lemma.
\end{proof}

Therefore, by Theorem \ref{tauw*} we have the following:

\begin{corollary}\label{obs:totalhalf}
    In a balanced discrete hypergraph of $d$-intervals on
    $[k]$ there exists a matching of total size $\frac{k}{2d}$. If the $d$-intervals are separated and
    Conjecture \ref{tauw*nuw}L is true, then there exists a matching of
    total size $\frac{k}{d}$.
\end{corollary}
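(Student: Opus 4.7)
The plan is to obtain the corollary by plugging the right bound on $\tau^*_\ell/\nu_\ell$ into Lemma \ref{discreteweaker}. The lemma converts any inequality of the form $\tau^*_\ell(H) \le \alpha\,\nu_\ell(H)$ (for a balanced hypergraph on $[k]$) into a matching of total length at least $k/\alpha$, so the work is done provided one can supply such an $\alpha$ from the preceding sections.

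For the first (unconditional) part, I would apply Theorem \ref{tauw*} with the weight function $w = \ell$. This is legitimate since, in the discrete setting, $\ell(h)$ is the (integer) number of points of $h$, so $\ell$ is a bona fide weight system on $E(H)$. Theorem \ref{tauw*} then gives $\tau^*_\ell(H) \le 2d\,\nu_\ell(H)$, and Lemma \ref{discreteweaker} with $\alpha = 2d$ produces a matching $M$ with $\sum_{m\in M}|m| \ge k/(2d)$, which is exactly the first assertion.

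For the second (conditional) part, I would replace the use of Theorem \ref{tauw*} by the assumed Conjecture \ref{tauw*nuw}L: for separated $d$-intervals and the specific weight $w = \ell$, this asserts $\tau^*_\ell(H) \le d\,\nu_\ell(H)$. Feeding $\alpha = d$ into Lemma \ref{discreteweaker} then yields a matching of total length at least $k/d$.

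There is essentially no obstacle to overcome here; the statement is a direct corollary once Lemma \ref{discreteweaker} and Theorem \ref{tauw*} are in place. The only verifications are that $\ell$ is a valid integer-valued weight in the discrete setting and that the balanced hypothesis on $H$ is used only through the lemma itself, both of which are immediate.
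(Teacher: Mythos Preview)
Your proposal is correct and matches the paper's approach exactly: the paper simply says ``Therefore, by Theorem \ref{tauw*} we have the following'' before stating the corollary, meaning it too plugs $\alpha=2d$ (from Theorem \ref{tauw*} with $w=\ell$) and $\alpha=d$ (from Conjecture \ref{tauw*nuw}L) into Lemma \ref{discreteweaker}. There is nothing more to the argument.
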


 In the continuous case, for a $d$-intervals hypergraph to be balanced it has to consist of
 intervals that are half closed (say at the left)  and half open (at the right). Assuming this, here is
 an appealing special case of Conjecture \ref{tauw*nuw}L: A balanced hypergraph of separated $d$-intervals in
 $[0,1]^{\cup d}$ has a matching of total length $1$. By the first part of the observation, there exists in such a
 hypergraph a matching of total length $\frac{1}{2}$.

\bigskip

\begin{theorem} \label{reduction}
Let $\alpha$ be such that for every hypergraph of $d$-intervals
$\tau^*_{\ell} \le  \alpha d\nu_{\ell}$. Then for every hypergraph
of $d$-intervals and for every weight function $w$, $\tau_{w} \le
\alpha d^2\nu_{w}$.
\end{theorem}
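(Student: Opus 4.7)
The plan is to split the claim into two halves. Theorem \ref{wtautau*} already delivers $\tau_w \le d\tau^*_w$, so it suffices to prove the fractional estimate $\tau^*_w(H) \le \alpha d\,\nu_w(H)$. I attack this by constructing from $(H,w)$ a \emph{balanced} hypergraph $H'_0$ of $d$-intervals whose vertex set has size essentially $\tau^*_w(H)$, and then using Lemma \ref{discreteweaker} together with the length-weighted hypothesis to extract a long matching in $H'_0$ that lifts to a heavy $w$-matching in $H$.

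First I would pick an optimal rational primal-dual pair $(g^*,f^*)$ for the LPs defining $\tau^*_w$ and $\nu^*_w$, so that $g^*[V]=\nu^*_w=\tau^*_w$. Complementary slackness furnishes two identities: $g^*[h]=w(h)$ whenever $f^*(h)>0$, and $\sum_{h\ni v}f^*(h)=1$ whenever $g^*(v)>0$. Let $q$ be a common integer denominator of $g^*$ and $f^*$, and put $m(v)=qg^*(v)$. Form $H'_0$ by restricting to $H_0=\{h:f^*(h)>0\}$ and replacing each vertex $v$ with $m(v)>0$ by a run of $m(v)$ consecutive new points on the same line, interleaved in the natural order. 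Every component of each $h\in H_0$ remains a contiguous interval on the new lines, so $H'_0$ is genuinely a hypergraph of $d$-intervals; moreover $|h'|=\sum_{v\in h}m(v)=qw(h)$ by the first complementary-slackness identity, and $|V(H'_0)|=q\tau^*_w$ since every $v$ with $g^*(v)>0$ lies in some $h\in H_0$ (by the second identity).

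The second identity also shows that $f^*|_{H_0}$ is a perfect fractional matching of $H'_0$, so $H'_0$ is balanced. Applying the hypothesis of the theorem to the $d$-interval hypergraph $H'_0$ and then invoking Lemma \ref{discreteweaker} with its parameter instantiated to $\alpha d$ yields an integer matching $M$ of $H'_0$ with $\sum_{h\in M}|h'|\ge q\tau^*_w/(\alpha d)$. Viewing $M$ as a matching in $H_0\subseteq H$, its $w$-weight equals $(1/q)\sum_{h\in M}|h'|\ge\tau^*_w/(\alpha d)$, giving $\nu_w(H)\ge\tau^*_w(H)/(\alpha d)$. Combined with Theorem \ref{wtautau*} this is the desired $\tau_w\le d\tau^*_w\le\alpha d^2\nu_w$.

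The main obstacle is arranging the blow-up so that the LP-duality data simultaneously yield the $d$-interval structure of $H'_0$ and the balancedness needed by Lemma \ref{discreteweaker}. The paper flags KKMS as the topological tool for this section; it can be used as an alternative to LP duality to extract the requisite balanced sub-hypergraph with the same length/weight properties, but the remainder of the argument is essentially the same bookkeeping.
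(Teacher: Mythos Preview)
Your reduction via Theorem~\ref{wtautau*} to the fractional estimate $\tau^*_w \le \alpha d\,\nu_w$ is fine, and the complementary-slackness construction of the balanced blown-up hypergraph $H'_0$ is clean. The gap is in the last step, where you write ``viewing $M$ as a matching in $H_0\subseteq H$''. Your blow-up discards every vertex $v$ with $g^*(v)=0$, so two edges of $H_0$ that intersect only at such vertices become disjoint in $H'_0$ while still meeting in $H$. Hence a matching in $H'_0$ need not lift to a matching in $H$.

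For a concrete instance (with $d=1$), take $V=\{1,2,3\}$, edges $h_1=\{1,2\}$, $h_2=\{2,3\}$, $h_3=\{1\}$, $h_4=\{3\}$, all of weight $1$. Then $g^*=(1,0,1)$ and $f^*\equiv\frac12$ form an optimal primal--dual pair satisfying both complementary-slackness identities. Here $H_0=\{h_1,h_2,h_3,h_4\}$, vertex $2$ is discarded, and the images $h_1',h_2'$ are disjoint in $H'_0$, yet $h_1\cap h_2=\{2\}$ in $H$. One can sometimes avoid this by choosing a different optimal $f^*$, but your argument neither makes such a choice nor proves one is always available.

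This is exactly the obstruction that the paper's KKMS argument handles, and it is why KKMS is not merely an ``alternative to LP duality'' here. The output of KKMS is a balanced family $\T$ \emph{together with} a single point $\vec{x}\in\bigcap_{T\in\T}B_T$; the witness edge $h(T)$ lies inside $int(L(T,\vec{x}))$, and for disjoint $T_1,T_2\in\T$ these open sets are disjoint as subsets of the line, so $h(T_1)\cap h(T_2)=\emptyset$ in $H$. Your LP construction supplies no such common separating configuration, and some additional idea is needed before the rest of the bookkeeping can go through.
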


The simplex $\Delta_k$ is the set of all points $\vec{x}=(x_1,x_2,
\ldots ,x_{k+1})\in \mathbb{R}^{k+1}_+$ satisfying $\sum_{i \le
k+1}x_i=1$. For $S\subseteq [k+1]$ let $F(S)$ be the face of
$\Delta_k$ consisting of the points $\vec{x}$ satisfying $x_i=0$ for
all $i \not \in S$.

\begin{theorem}[KKMS]\label{kkms} Suppose that with every subset $T$ of $[k+1]$ there is associated a
subset $B_T$ of $\Delta_k$, so that all $B_T$ are closed or all of
them are open, and $F(R) \subseteq \bigcup_{T \subseteq R}B_T$ for
every $R \subseteq [k+1]$. Then there exists a balanced set $\T$ of
subsets of $[k+1]$, satisfying $\bigcap_{T \in \T} B_T\neq
\emptyset$.
\end{theorem}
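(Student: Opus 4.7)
The proof strategy is to reduce the KKMS theorem to a Sperner-type combinatorial statement on iterated barycentric subdivisions of $\Delta_k$ and then extract a limit point by compactness. I handle the closed case in detail; the open case follows by a standard shrinking and limit argument, replacing each open $B_T$ by a slightly smaller closed set $B_T^{(n)}$ so that the covering hypothesis still holds (possible by compactness of $\Delta_k$ and the Lebesgue number lemma), and passing to the limit $B_T^{(n)} \nearrow B_T$ while using finiteness of balanced families to stabilize the witness.

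For the closed case, let $\tau_m$ denote the $m$-th iterated barycentric subdivision of $\Delta_k$. Label each vertex $v$ of $\tau_m$ by a subset $T(v) \subseteq [k+1]$ with $v \in B_{T(v)}$ and $T(v) \subseteq \operatorname{supp}(v)$, where $\operatorname{supp}(v)$ is the minimal face of $\Delta_k$ containing $v$; such a label exists by applying the covering hypothesis with $R = \operatorname{supp}(v)$. The combinatorial core is a \emph{balanced Sperner lemma}: for any such labeling, some simplex $\sigma$ of $\tau_m$ has the property that $\{T(v) : v \in \sigma\}$ contains a balanced subfamily. Once this is granted, as $m \to \infty$ the mesh of $\tau_m$ tends to $0$, so by compactness a subsequence of these distinguished simplices collapses to a single point $x^* \in \Delta_k$; since there are only finitely many balanced families on $[k+1]$, pigeonhole yields a balanced $\mathcal{T}$ recurring infinitely often, and closedness of each $B_T$ forces $x^* \in \bigcap_{T \in \mathcal{T}} B_T$, as required.

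The main obstacle is the balanced Sperner lemma itself, which is notably more subtle than its classical ancestor: ``balanced'' is a \emph{global} linear-algebraic condition on a label family rather than a distinctness/rainbow condition on individual labels, so the parity-of-doors flow argument does not transfer directly. My planned workaround is a Brouwer degree computation applied to the piecewise-linear map $\varphi_m \colon |\tau_m| \to \Delta_k$ defined by $\varphi_m(v) = b_{T(v)}$ (the barycenter of $F(T(v))$) and extended affinely over simplices. The inclusion $T(v) \subseteq \operatorname{supp}(v)$ implies that $\varphi_m$ maps each boundary face $F(R)$ into itself, so $\varphi_m$ has degree one and in particular hits the centroid $b_{[k+1]}$. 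Any simplex $\sigma$ of $\tau_m$ containing a preimage of $b_{[k+1]}$ satisfies $b_{[k+1]} \in \operatorname{conv}\{b_{T(v)} : v \in \sigma\}$; since a family $\mathcal{T}$ is balanced precisely when $b_{[k+1]}$ lies in $\operatorname{conv}\{b_T : T \in \mathcal{T}\}$, this $\sigma$ yields the desired balanced subfamily, completing the proof modulo the degree computation.
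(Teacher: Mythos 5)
The paper does not prove this statement: Theorem~\ref{kkms} is quoted as a known result of Shapley \cite{shapley} (a generalization of KKM), and is only applied, not reproved. So there is no internal proof to compare against; I can only assess your argument on its own terms, and on those terms it is essentially correct and is one of the standard routes to KKMS. Your reduction of the ``balanced Sperner lemma'' to a degree computation is sound: since $T(v)\subseteq \operatorname{supp}(v)$ forces $b_{T(v)}\in F(\operatorname{supp}(v))$, the piecewise-linear map $\varphi_m$ sends every face of $\Delta_k$ into itself; the straight-line homotopy $(1-t)x+t\varphi_m(x)$ then stays inside each proper face, so $\varphi_m|_{\partial\Delta_k}$ is homotopic to the identity of $\partial\Delta_k\cong S^{k-1}$, has degree one, and cannot extend to a map $\Delta_k\to\partial\Delta_k$; hence every interior point, in particular $b_{[k+1]}$, is attained. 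Writing $b_{[k+1]}$ as a convex combination of the $b_{T(v)}$ over the carrier simplex and keeping the labels with positive coefficients gives exactly a balanced family, by the characterization ``$\T$ balanced iff $b_{[k+1]}\in\operatorname{conv}\{b_T: T\in\T\}$,'' which you state correctly. The compactness/pigeonhole extraction of $x^*$ and the passage from open to closed sets via Lebesgue numbers on each (compact) face are both fine; note that in the open case a single sufficiently fine closed shrinking already suffices, since $B_T^{(n)}\subseteq B_T$ -- no limit or stabilization is needed there.

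Two small points you should tidy up when writing this in full. First, the degree computation is the load-bearing step and is currently only asserted; the two-line argument above (boundary homotopy to the identity plus the no-retraction consequence) should appear explicitly. Second, as stated the theorem indexes $B_T$ over \emph{all} subsets $T$, including $T=\emptyset$, for which $b_T$ is undefined and which can never belong to a balanced family; you should adopt the standard convention that only nonempty $T$ carry (possibly nonempty) sets $B_T$, so that every vertex label $T(v)$ is nonempty. Neither point is a genuine gap. It is also worth observing that your argument adapts with almost no change to prove Komiya's generalization (Theorem~\ref{komiya}), which the paper likewise cites without proof: one replaces the barycenters $b_T$ by the chosen points $q(F)$ and runs the same degree argument on the polytope $P$.
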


This is a generalization, by Shapley \cite{shapley}, of the famous KKM (Knaster-Kuratowski-Mazurkiewicz) theorem, which is the
case where
the only nonempty $B_T$'s are $B_{\{i\}}$ for singletons $\{i\}
\subset [k+1]$.

\bigskip

{\em Proof of Theorem \ref{reduction}}.~~   We may  assume that all edges of $H$ are contained in $(0,1)$. Assume that $\tau_w(H)>k$. The theorem will
follow if we show that $\nu_w(H) > \frac{k}{\alpha d^2}$. The assumption that $\tau_w(H)>k$ implies that for
every point $\vec{x}=(x_1, x_2, \ldots ,x_{k+1})$ in $P =
\Delta_{k}$, the points
$p_{\vec{x}}(m)=\sum_{i \le m}x_i$, $1 \le m \le k$ do not
constitute a weighted cover for $H$. Note that the points
$p_{\vec{x}}(m)$ are taken with multiplicity, namely if such a point
repeats $q$ times, it is given weight $q$. The above can
be stated as:

\begin{assertion}\label{uncoverededges1}
For each $\vec{x} \in \Delta_k$ there exists $h \in H$ that contains
fewer than $w(h)$ points $p_{\vec{x}}(m)$ (counted with multiplicity).
\end{assertion}

Given $\vec{x}\in P$, let $p_{\vec{x}}(0)=0$ and
$p_{\vec{x}}(k+1)=1$. For $T \subseteq [k+1]$, define $L(T,\vec{x})=
\bigcup_{t \in T}[p_{\vec{x}}({t-1}),p_{\vec{x}}(t)]$.
Let $c(T,\vec{x})$ be the number of connected components of $L(T,\vec{x})$.
For $X \subseteq [0,1]$ let $np(X,\vec{x})$ be the number of points $p_{\vec{x}}(m)$ (counted with multiplicity) in $X$. Note that $np(int(L(T,\vec{x})),\vec{x}) \ge |T| - c(T,\vec{x})$  ($int(X)$ is the interior of $X$).

We want now
to define sets $B_T$, towards an application of the KKMS theorem. A
definition that almost works is this: $\vec{x} \in B_T$ if there exists an edge $h\in H$ that is contained in
$int(L(T,\vec{x}))$ and $w(h)\ge np(int(L(T,\vec{x})),\vec{x}) +1$.
The problem
is that with this definition $B_T$ is not necessarily open (and
obviously also not necessarily closed). For example, let
$T=\{1,3\}$, and let $\vec{x}$ be such that $x_2=0$. It is possible
that there exists $h \in H$ properly contained in $L(T,\vec{x})) =
[x'_0, x'_1] \cup [x'_1+x'_2, x'_1+x'_2+x'_3]= [x_0,x_1+x_2+x_3]$,
so $\vec{x} \in B_T$ by this definition, but for points $\vec{x}'$
arbitrarily close to $\vec{x}$, in which $x'_2>0$, there is no $h
\in H$ contained in $L(T,\vec{x}')=[x'_0, x'_1] \cup [x'_1+x'_2,
x'_1+x'_2+x'_3]$, and so $\vec{x}' \notin B_T$. For this reason, the
definition of $B_T$ will be a bit more involved.

Let $A_{T}(>\e)$ (resp. $A_{T}(\ge
\e)$) be the set of those points $\vec{x}$ for which the following hold:
\begin{itemize}
\item $c(T,\vec{x}) \le d$,
\item there exists an
edge  $h \in H$ contained in $int(L(T,\vec{x}))$, satisfying $w(h)\ge np(int(L(T,\vec{x})),\vec{x}) +1$, and
\item $dist(h,
\partial(L(T,\vec{x}))
> \e$ (resp. $dist(h,
\partial(L(T,\vec{x})) \ge \e$.
\end{itemize}
(Here $\partial(X)$ denotes the boundary of $X$, and $dist$ stands for ``distance''.)

\begin{assertion}
 $F(S) \subseteq \bigcup_{\e>0}
\bigcup_{T \subseteq S}A_{T}(>\e)$ for every subset $S$ of $[k+1]$.
\end{assertion}
\begin{proof}
If $\vec{x} \in F(S)$ then by Assertion \ref{uncoverededges1} there
exists $h \in H$ that is covered by fewer than $w(h)$ points
$p_{\vec{x}}(m)$. Since $h$ has at most $d$ interval components, this means that there exists some $R \subseteq [k+1]$ such that $h \subset int(L(R,\vec{x})$, $w(h) > np(int(L(R,\vec{x})),\vec{x})$, and $c(R,\vec{x}) \le d$. Let
$T = R \cap S$. Since $x_i=0$ for $i \notin S$ we have $int(L(R,\vec{x}))=int(L(T,\vec{x}))$. Taking small enough $\e$, we have then $\vec{x}
\in A_T(>\e)$. This proves the assertion.
\end{proof}

Since $F(S)$ is compact there exists $\e(S)>0$ such that $F(S)
\subseteq \bigcup_{T \subseteq S}A_{T}(>\e(S))$. Let
$\delta=\frac{1}{2}\min\{\e(S)\mid S\subseteq [k+1]\}$ and for $T
\subseteq [k+1]$ define $B_T=A_T(\ge \delta)$. Then $F(S) \subseteq
\bigcup_{T \subseteq S}B_{T}$ for all $S \subseteq [k+1]$.

\begin{assertion}
The sets $B_T$ are closed.
\end{assertion}
\begin{proof}
 Let $\vec{x}$ be a limit point of the sequence  $\vec{x}^n \in
 B_T$. For every $n$ let $h^n$ be an edge witnessing the fact that $\vec{x}^n \in B_T$.
Since $H$ is finite, there is an edge $h \in H$ such that $h^n=h$
for infinitely many values of $n$. Then $h \subseteq L(T,\vec{x})$,
and its distance from the boundary of $L(T,\vec{x})$ is at least
$\delta$, meaning that $\vec{x} \in B_T$.
\end{proof}

We have shown that the sets $B_T$ satisfy the conditions of the KKMS
theorem, so by this theorem there exists a balanced collection $\T$
of discrete $d$-intervals in $[k+1]$ such that $\bigcap_{T\in \T}
B_{T} \neq \emptyset$. Then $\tau^*_{\ell}(\T) \le  \alpha
d\nu_{\ell}(\T)$, and by Lemma \ref{discreteweaker} there is a
matching $M \in \T$ with $\sum_{m \in M}|m| \ge \frac{k+1}{\alpha
d}$. Therefore,
\begin{equation}\label{eq:sumM}
\sum_{m \in M}(|m| - c(M,\vec{x}) +1) \ge \sum_{m \in M}\frac{|m|}{c(M,\vec{x})} \ge \frac{1}{d}\sum_{m \in M}|m| \ge \frac{k+1}{\alpha d^2}.
\end{equation}

Choose a point $\vec{x} \in \bigcap_{T \in \T} B_{T}$. For every $T \in \T$, the fact that $\vec{x} \in B_{T}$ means that there exists $h(T) \in H$ such that $h(T) \subset int(L(T,\vec{x}))$ and $w(h(T))\ge np(int(L(T,\vec{x})),\vec{x}) +1 \ge |T| - c(T,\vec{x}) +1$.
Therefore by (\ref{eq:sumM}) the proof of the theorem will be complete
if we show that for disjoint $T_1, T_2 \in \T$  the edges $h(T_1)$ and $h(T_2)$ are disjoint. If they do meet, then
any point $z \in h(T_1) \cap h(T_2)$ lies in the interior of a connected component
of $L(T_1,\vec{x})$ and in the interior of a connected component
of $L(T_2,\vec{x})$. There exists then $t$  such that $x_t \neq 0$, and  $z \in [p_{\vec{x}}({t-1}),p_{\vec{x}}(t)]$. Then  $t \in T_1 \cap T_2$, a contradiction. This concludes the proof of the theorem.


\section{A proof of Theorem~\ref{t:kaiser} using KKMS}
\label{sec.tardos-kaiser}

In this section we use the KKMS theorem to give a short proof of Theorem~\ref{t:kaiser}. In~\cite{kaiser}, a certain version of
Borsuk's theorem was used. The KKMS theorem and its generalizations
get to the point more directly.

We shall also use (as is done in \cite{kaiser}) a theorem of
F\"uredi \cite{furedi}:

\begin{theorem}\label{furedi}
If all edges of a hypergraph $H$ are of size at most $d$, then
$\nu(H) \ge \frac{\nu^*(H)}{d-1+\frac{1}{d}}$. If, in addition,  $d>2$ and $H$
does not contain a copy of the $d$-uniform projective plane then
$\nu(H) \ge \frac{\nu^*(H)}{d-1}$.
\end{theorem}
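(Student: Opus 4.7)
My plan is to prove both bounds via LP duality combined with a probabilistic rounding argument, followed by a stability analysis to handle the extremal case. By LP duality, $\nu^*(H) = \tau^*(H)$, so I would fix an optimal fractional matching $x^*$, which by a standard reduction may be taken as a basic feasible solution of the matching polytope. The support $H^* = \{e : x^*(e) > 0\}$ then carries useful combinatorial rigidity: the vertices in the support of any optimal fractional cover $y^*$ are \emph{tight} by complementary slackness, i.e., have fractional $x^*$-degree exactly $1$, and the incidence matrix restricted to the support has almost full rank.

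To obtain the first bound, I would use randomized rounding: independently select each edge $e$ with probability $\lambda\, x^*(e)$ for an appropriate $\lambda \in (0,1]$, then resolve conflicts greedily (e.g.\ by deleting every selected edge that shares a vertex with another selected edge). Since every edge has at most $d$ vertices and each vertex satisfies $\sum_{f \ni v} x^*(f) \le 1$, a first/second-moment calculation shows that the expected size of the surviving matching is at least $\nu^*(H)/(d - 1 + \tfrac{1}{d})$ for an optimal choice of $\lambda$. The constant arises naturally from the projective-plane-like bookkeeping: each edge competes with at most $d(d-1)$ others across its $d$ vertices, and the worst rounding loss occurs when conflicts distribute evenly. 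For the improved bound under the assumption that $d>2$ and $H$ avoids the $d$-uniform projective plane, I would study the near-equality cases of the rounding estimate. Equality should force $H^*$ to be a $d$-regular, linear intersecting family in which every pair of edges meets in exactly one vertex --- precisely the defining property of a projective plane of order $d-1$. Forbidding this sub-hypergraph then yields strict slack in the rounding bound, pushing the constant down to $d-1$.

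The main obstacle will be the stability step: rigorously identifying the $d$-uniform projective plane as the \emph{only} extremal obstruction. This requires an Edmonds--Gallai-style decomposition of $H^*$ into a ``generic'' part on which rounding is comfortably below the threshold, and a ``singular'' component that must be isomorphic to the projective plane of order $d-1$. Making this decomposition precise, rather than relying on ad hoc case analysis, is the technical heart of Füredi's argument and is where most of the work would lie; a clean presentation may also benefit from Lov\'asz's fractional matching polytope characterization to pin down the structure of basic feasible $x^*$ directly.
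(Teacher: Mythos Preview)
The paper does not contain a proof of this theorem at all: it is quoted as a result of F\"uredi \cite{furedi} and used as a black box in Sections~\ref{sec.tardos-kaiser} and~\ref{toptau*nu}. So there is no ``paper's own proof'' to compare your proposal against.

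That said, your plan has a genuine gap. The sharp constant $d-1+\tfrac{1}{d}$ is not the kind of number that falls out of a first/second-moment rounding calculation; such arguments typically produce bounds of the form $\nu^*/cd$ with an inexplicit or analytic constant $c$, not the combinatorial quantity $(d^2-d+1)/d$. You assert that ``a first/second-moment calculation shows'' the bound, but you do not perform it, and I do not see how it would. F\"uredi's actual argument is purely structural: one takes a basic optimal fractional matching $x^*$ and uses (a hypergraph analogue of) Lov\'asz's description of the vertices of the fractional matching polytope to decompose the non-integral part of the support of $x^*$ into vertex-disjoint pieces, each of which is essentially an intersecting family. The heart of the proof is then the lemma that an intersecting family with edges of size at most $d$ has $\nu^* \le d-1+\tfrac{1}{d}$, with equality forcing a projective plane of order $d-1$; picking one edge from each piece gives the integral matching. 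You mention Lov\'asz's polytope characterization only as a possible afterthought for ``a clean presentation,'' but in fact it (or something equivalent) is the main engine, and the randomized rounding is not needed at all. Your ``stability step'' is not a refinement of a rounding bound; it \emph{is} the proof, done directly via the intersecting-family lemma rather than via near-equality analysis of an inequality that was never established.
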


The first half of the following theorem just restates the non-separated
part of Theorem~\ref{t:kaiser}:
\begin{theorem}\label{thm:kaiser}\cite{kaiser} If $H$ is a hypergraph of  $d$-intervals then:
\begin{enumerate}

\item   $\tau(H) \le (d^2-d+1)\nu(H)$, and

\item  $\tau(H) \le d\nu^*(H)$.

\end{enumerate}

\end{theorem}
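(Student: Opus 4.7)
The plan is to adapt the KKMS-based argument of Section~\ref{topfrac} to the unweighted setting, proving both parts in parallel. Assume $\tau(H) > k$; I will produce a balanced family $\T$ of subsets of $[k+1]$, each of size at most $d$, together with edges $h(T) \in H$ for $T \in \T$, from which part~(2) follows immediately and part~(1) follows by applying Theorem~\ref{furedi} to $\T$.

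After rescaling so that $H \subseteq (0,1)$, set $p_{\vec{x}}(0) = 0$, $p_{\vec{x}}(k+1) = 1$, and $p_{\vec{x}}(m) = \sum_{i \le m} x_i$ for $\vec{x} \in \Delta_k$ and $1 \le m \le k$. Since $\tau(H) > k$, the points $p_{\vec{x}}(1), \ldots, p_{\vec{x}}(k)$ (counted with multiplicity) do not cover $H$, so some $h \in H$ avoids all of them. Each of the at most $d$ components of $h$ lies in some open interval $(p_{\vec{x}}(t-1), p_{\vec{x}}(t))$, and collecting these indices produces a set $T = T(h,\vec{x})$ with $|T| \le d$ and $h \subseteq int(L(T,\vec{x}))$; moreover, $\vec{x} \in F(S)$ forces $T \subseteq S$ since $x_t = 0$ for $t \notin S$.

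Next I would define closed sets $B_T \subseteq \Delta_k$ (only for $|T| \le d$) using the $\e$-buffer trick of Section~\ref{topfrac}: $\vec{x} \in B_T$ iff some $h \in H$ satisfies $h \subseteq L(T,\vec{x})$ and $dist(h, \partial L(T,\vec{x})) \ge \delta$, where a uniform $\delta > 0$ is chosen by compactness of each face $F(S)$ so that $F(S) \subseteq \bigcup_{T \subseteq S} B_T$ for every $S \subseteq [k+1]$. Finiteness of $H$ makes each $B_T$ closed, and the KKMS theorem then yields a balanced $\T \subseteq 2^{[k+1]}$ of sets of size $\le d$, weights $\lambda_T \ge 0$ with $\sum_{T \ni i} \lambda_T = 1$ for every $i \in [k+1]$, and a common point $\vec{x}^* \in \bigcap_{T \in \T} B_T$; I fix an edge $h(T) \in H$ witnessing $\vec{x}^* \in B_T$ for every $T \in \T$.

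The crucial combinatorial observation is that a point $p \in (0,1)$ lies in at most one open interval $(p_{\vec{x}^*}(t-1), p_{\vec{x}^*}(t))$, so if $h(T_1)$ and $h(T_2)$ both contain $p$ then $T_1$ and $T_2$ share the corresponding index $t$. Consequently the $\lambda_T$ push forward to a fractional matching $f(h) = \sum_{T:\, h(T)=h} \lambda_T$ of $H$ with $\sum_{h \ni p} f(h) \le \sum_{T \ni t} \lambda_T = 1$; its total size $\sum_T \lambda_T$ is at least $(k+1)/d$ because $\sum_T \lambda_T |T| = k+1$ and $|T| \le d$, which gives part~(2) upon taking $k = \tau(H) - 1$. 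For part~(1), view $\T$ as a hypergraph on $[k+1]$ with edges of size at most $d$ and fractional matching number $\ge (k+1)/d$; Theorem~\ref{furedi} furnishes an integral matching in $\T$ of size $\ge (k+1)/(d^2-d+1)$, and disjoint members of $\T$ yield disjoint edges of $H$ by the same observation, so $\nu(H) \ge (k+1)/(d^2-d+1)$. The main obstacle is the technical verification that the $B_T$ are closed and satisfy the KKMS covering hypothesis, but this is handled essentially as in Section~\ref{topfrac}.
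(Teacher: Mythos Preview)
Your proposal is correct and follows essentially the same route as the paper: assume $\tau(H)>k$, define KKMS sets indexed by subsets of $[k+1]$ of size at most $d$, extract a balanced family $\T$, push forward the perfect fractional matching on $\T$ to get $\nu^*(H)\ge (k+1)/d$, and apply F\"uredi's theorem to $\T$ for the integral bound. The one technical difference is that you build \emph{closed} sets $B_T$ via the $\e$-buffer device of Section~\ref{topfrac}, whereas the paper uses \emph{open} sets $B_I$ directly (requiring that $h$ miss every point $p_{\vec{x}}(m)$ and that each $i\in I$ contain a full component of $h$); this lets the paper skip the compactness step that produces a uniform $\delta$. One small imprecision in your write-up: the sentence ``a point $p\in(0,1)$ lies in at most one open interval $(p_{\vec{x}^*}(t-1),p_{\vec{x}^*}(t))$'' does not by itself justify the fractional-matching inequality, since $h(T)\subseteq int(L(T,\vec{x}^*))$ may pass through a partition point $p_{\vec{x}^*}(m)$ when consecutive indices lie in $T$; but in that case both adjacent nondegenerate indices must belong to $T$, so one can still assign to $p$ a fixed index $t(p)$ lying in every such $T$, and your bound $\sum_{T\ni t(p)}\lambda_T=1$ goes through.
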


\begin{proof}
Like before, we assume that all edges of $H$ are contained in $(0,1)$. Our aim is to show that if $\tau(H)>k$ then $\nu(H) > \frac{k}{d^2-d+1}$. Every point
$\vec{x}=(x_1, x_2, \ldots ,x_{k+1})$ in $P = \Delta_{k}$
corresponds to a distribution of $k$ points $p_{\vec{x}}(m)=\sum_{i
\le m}x_i$, $1 \le m \le k$ on $L$. Set $p_{\vec{x}}(0) = 0$ and
$p_{\vec{x}}(k+1) = 1$.

For a subset $I$ of $[k+1]$ let $B_I$ consist of all vectors $\vec{x} \in
\Delta_k$ for which there exists an edge $h \in H$ satisfying: (a)
$h$ does not contain any point $p_{\vec{x}}(m)$, and (b) for each $i
\in I$ there exists at least one $j\le d$ such that $h^j \subseteq
(p_{\vec{x}}({i-1}),p_{\vec{x}}({i}))$.  Note that  if $B_I \neq
\emptyset$ then $|I| \le d$.

Clearly, the sets $B_{I}$ are open. By the assumption that $\tau
>k$, for every $\vec{x} \in P$ the points $p_{\vec{x}}(m)$, $1\le m
\le k$, do not cover $H$, meaning that there exists $h\in H$ not
containing any $p_{\vec{x}}(m)$. This, in turn, means that $\vec{x}
\in B_{I}$ for some $I \subseteq [k+1]$. We have thus shown that $P
=\bigcup B_{I}$.

Let $F=F(J)$ be a face of $\Delta_k$.  If $\vec{x}\in F(J)$ then
$(p_{\vec{x}}({i-1}),p_{\vec{x}}({i}))=\emptyset$ for $i \notin J$,
and hence it it impossible to have $h^j \subseteq
(p_{\vec{x}}({i-1}),p_{\vec{x}}({i}))$. Thus  $\vec{x} \in B_{I}$
for some $I \subseteq J$. This proves that $F \subseteq \bigcup_{I
\subseteq J}B_{I}$.

By Theorem \ref{kkms}
 there exists a balanced set  $\I$ of subsets of $[k+1]$, satifying:
\begin{enumerate}
\item
$\bigcap_{I \in \I} B_{I}\neq \emptyset$, and

\item
$|I|\le d$ for all $I \in \I$.

\end{enumerate}

Since $\I$ is balanced, (2) implies that $\nu^*(\I) \ge
\frac{k+1}{d}$. By Theorem \ref{furedi}, $\nu(\I) \ge
\frac{\nu^*(\I)}{d-1+\frac{1}{d}}\ge \frac{k+1}{d(d-1+\frac{1}{d})}
> \frac{k}{d^2-d+1}$.

Let $M$ be a matching in $\I$ of size at least $m=\lceil \frac{k}{d^2-d+1}\rceil$.

Let $\vec{x}$ be a point in $\bigcap_{I\in \I} B_{I}$. For every $I
\in \I$ let $h(I)$ be the edge of $H$ witnessing the fact that
$\vec{x} \in B_I$. Then the edges $h(I),~I \in M$ form a matching of
size $m$ in $H$, proving the lower bound on $\nu$.

To prove (2), let $f: \I \to \mathbb{R}^+$ be the fractional
matching of size at least $\frac{k+1}{d}$ whose existence we have
just proved. Then the function $\tilde{f} : H \to \mathbb{R}^+$
defined by $\tilde{f}(h)=f(I)$ if $h=h(I)$ and $\tilde{f}(h)=0$
otherwise, is a fractional matching of size at least $\frac{k+1}{d}$
in $H$.
\end{proof}

For the convenience of the reader, we restate also the  separated case of
Theorem~\ref{t:kaiser}:
\begin{theorem}\label{dintkaiser}
 In a hypergraph  $H$ of separated $d$-intervals,  $\tau(H) \le (d^2-d)\nu(H)$.
\end{theorem}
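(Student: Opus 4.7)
The plan is to adapt the KKMS-based proof of Theorem \ref{thm:kaiser} to the separated setting, using Komiya's generalization of KKMS to exploit the independence of the $d$ lines, and ruling out the projective plane in the resulting family so as to apply the stronger half of F\"uredi's Theorem \ref{furedi}.

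First, I would replace the ambient simplex $\Delta_k$ by a product $P = \Delta_{k_1} \times \cdots \times \Delta_{k_d}$ with $\sum_j k_j = k$, each factor governing the placement of $k_j$ points on line $L_j$. Assume $\tau(H) > k$. For each multi-index $\mathbf{I} = (I_1, \ldots, I_d)$ with $I_j \subseteq [k_j + 1]$, let $B_{\mathbf{I}} \subseteq P$ be the open set of placements $\vec{x}$ for which some edge $h \in H$ avoids all placed points and has, for each $j$ and each $i \in I_j$, its component on line $L_j$ strictly inside the arc $(p_{\vec{x}}(i-1), p_{\vec{x}}(i))$. Separatedness forces $|I_j| \le 1$ for every $j$, so $|\mathbf{I}| = \sum_j |I_j| \le d$. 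The covering condition $F \subseteq \bigcup_{\mathbf{I}\preceq F} B_{\mathbf{I}}$ holds on every face $F$ of $P$ by the same argument as in Theorem \ref{thm:kaiser}, so Komiya's theorem yields a balanced family $\I$ of such $\mathbf{I}$'s with $\bigcap_{\mathbf{I}\in\I} B_{\mathbf{I}} \neq \emptyset$, and $\nu^*(\I) \ge (k+d)/d$.

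Second, I would prove the key combinatorial claim: for $d \ge 3$, the family $\I$ cannot contain a copy of the $d$-uniform projective plane $\Pi_d$. In $\Pi_d$ each edge has size exactly $d$ and every two edges meet in exactly one vertex. An embedding into $\I$ would force each such edge to be a full rainbow transversal across the $d$ lines, with every pair of edges agreeing on exactly one line-coordinate. Counting coordinate-agreements across the $d$ lines and using the balance of $\I$ together with the vertex-partition sizes $|[k_j+1]|=k_j+1$ produces a contradiction with the rigid equidistant structure demanded by $\Pi_d$.

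Third, by the stronger half of F\"uredi's Theorem \ref{furedi},
\[
\nu(\I) \ge \frac{\nu^*(\I)}{d-1} \ge \frac{k+d}{d(d-1)} = \frac{k+d}{d^2-d} > \frac{k}{d^2-d}.
\]
Lifting a maximum matching in $\I$ back to $H$ via the witness edges (pairwise disjoint because the witnessing components lie in disjoint arcs on the respective lines) yields $\nu(H) > k/(d^2-d)$, hence $\tau(H) \le (d^2-d)\nu(H)$. The case $d = 2$ reduces to the classical Gallai/Tardos bound $\tau \le 2\nu$ for separated $2$-intervals. The main obstacle is the second step: the projective-plane exclusion, whose combinatorial argument leverages the line-transversal structure of $\I$ --- absent in the non-separated setting --- against the rigid symmetry of $\Pi_d$, and it is precisely this step that saves the unit constant.
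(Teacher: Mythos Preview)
Your outline is essentially the paper's own argument: apply Komiya's theorem to a product of simplices so that the auxiliary family $\I$ is $d$-partite, invoke the sharper half of F\"uredi's theorem on it, and lift a maximum matching back to $H$. Two corrections are in order.

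First, your ``key combinatorial claim'' is much simpler than you suggest, and your sketched justification is off: the exclusion of $\Pi_d$ has nothing to do with the balance of $\I$ or with the part sizes $k_j+1$. Any size-$d$ edge of $\I$ is a full transversal of the $d$ parts, so a copy of $\Pi_d$ inside $\I$ would make $\Pi_d$ itself $d$-partite $d$-uniform. That is impossible for $d\ge 3$: fix a point $p$ in part $V_1$; the $d$ lines through $p$ cover all of $\Pi_d$ and each uses $p$ as its unique $V_1$-vertex, so $V_1=\{p\}$, forcing every line to pass through $p$, contradicting $d^2-d+1>d$. The paper does not even spell this out; it simply cites the $d$-partite case of F\"uredi's theorem directly (and K\"onig for $d=2$).

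Second, the paper's parametrization is cleaner than yours and avoids a technical snag. It takes all factors equal, $P=(\Delta_k)^d$, assumes $\tau(H)>kd$, and assigns nonempty $B$ only to the \emph{vertices} of $P$, with $q(\vec t)=\vec t$ and $q(P)$ the barycenter. Komiya then outputs a set $Q$ of vertices with the barycenter in its convex hull, which is exactly the statement that the associated $d$-partite $d$-uniform hypergraph $D$ on sides of size $k+1$ is balanced, whence $\nu^*(D)=k+1$. Your indexing by multi-indices with possibly empty $I_j$ does not correspond to faces of $P$, and you never say which points $q(F)$ you feed to Komiya, so the word ``balanced'' in your conclusion is not yet justified; restricting to vertices as the paper does fixes both issues at once.
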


To prove this we use the following extension of the KKMS theorem
proved by Komiya \cite{komiya}:

\begin{theorem} \label{komiya}
Let $P$ be a polytope, and let a point $q(F)\in F$ be chosen for
every face $F$ of $P$. Let also $B_F$ be a an assignment of an open subset of $P$ to every face $F$, satisfying the condition
that every face $G$ is contained in $\bigcup_{F \subseteq G} B_F$.
Then there exists a collection $\F$ of faces  such that $q(P) \in
conv\{q(F) \mid F \in \F\}$ and $\bigcap_{F\in \F} B_{F}\neq
\emptyset$.
\end{theorem}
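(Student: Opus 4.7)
The plan is to follow the structure of the proof of Theorem~\ref{thm:kaiser} closely, but replace the simplex $\Delta_k$ with the product polytope $P=\prod_{j=1}^d\Delta_c$, so that the separated structure is built directly into the polytope, and invoke Komiya's theorem (Theorem~\ref{komiya}) in place of KKMS. Assume for contradiction that $\tau(H)>(d^2-d)\nu(H)$, write $\nu=\nu(H)$, and set $c=(d-1)\nu$, so $\tau(H)>dc$. A point $\vec x=(\vec x^1,\ldots,\vec x^d)\in P$ specifies $c$ ordered points $p_j(1)<\cdots<p_j(c)$ on each of the $d$ lines (with $p_j(0)=0$ and $p_j(c+1)=1$). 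For a face $F=F(I_1)\times\cdots\times F(I_d)$ of $P$, let $B_F$ consist of those $\vec x$ for which some $h\in H$ avoids every $p_j(m)$ and, on each line $j$ where $h$ has a component, has $h^j$ inside the open gap $(p_j(i-1),p_j(i))$ for the unique index $i\in I_j$. Since $h$ has at most one component per line, $B_F=\emptyset$ unless $|I_j|\le 1$ for every $j$; the relevant faces are therefore exactly the vertices of $P$, indexed by tuples $(i_1,\ldots,i_d)\in[c+1]^d$, each yielding a $d$-element set $I(F)=\{(j,i_j)\}\subseteq V=\bigsqcup_{j=1}^d[c+1]$.

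As in the proof of Theorem~\ref{thm:kaiser}, one verifies that the $B_F$ are open and that Komiya's covering hypothesis $G\subseteq\bigcup_{F\subseteq G}B_F$ holds for every face $G$ of $P$ (using $\tau(H)>dc$). Choosing $q(F)$ to be the centroid of $F$ (so $q(P)$ has every coordinate equal to $1/(c+1)$), Komiya's theorem supplies a family $\F$ of vertices of $P$ with a common point $\vec x^*\in\bigcap_{F\in\F}B_F$ and nonnegative weights $\lambda_F$ summing to $1$ satisfying $\sum_{F:i_j(F)=i}\lambda_F=1/(c+1)$ for every $(j,i)\in V$. Rescaling by $\delta_F=(c+1)\lambda_F$ turns these weights into a fractional partition of $V$ by the $d$-uniform $d$-partite hypergraph $\I=\{I(F):F\in\F\}$, and consequently $\nu^*(\I)\ge\sum_F\delta_F=c+1$.

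The decisive new observation, absent in the non-separated case, is that a $d$-partite $d$-uniform hypergraph cannot contain a copy of the $d$-uniform projective plane $PP_d$: every edge of $PP_d$ would have to be a transversal of the $d$-partition, and a double count of incidences forces each of the $d$ parts to contain exactly $(d^2-d+1)/d$ of the $d^2-d+1$ vertices of $PP_d$, which is never an integer for $d\ge 2$. Hence Füredi's theorem (Theorem~\ref{furedi}) applies in its improved form for $d\ge 3$, giving $\nu(\I)\ge\nu^*(\I)/(d-1)\ge(c+1)/(d-1)=\nu+1/(d-1)>\nu$; for $d=2$, $\I$ is bipartite, so König's theorem gives $\nu(\I)=\nu^*(\I)\ge c+1>\nu$. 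As in the proof of Theorem~\ref{thm:kaiser}, a matching in $\I$ lifts to a matching of the same size in $H$ by choosing a witnessing edge $h(F)\in H$ at $\vec x^*$ for each $F$: when $I(F_1)\cap I(F_2)=\emptyset$, the $j$-th components of $h(F_1)$ and $h(F_2)$ live in different gaps on every line where both are nonempty, so $h(F_1)\cap h(F_2)=\emptyset$. This produces a matching in $H$ larger than $\nu$, contradicting $\nu(H)=\nu$.

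The step I expect to require the most care is arranging the quantitative estimate so that the bound $\nu(\I)>\nu$ emerges without slack. This is why the factors of $P$ are taken of uniform dimension $c$---so that the constant $c+1$ appears on every factor of $q(P)$---and why the specific choice $c=(d-1)\nu$ is needed, so that the improved Füredi divisor $d-1$ cancels cleanly into $\nu+1/(d-1)$. The separated hypothesis enters twice: it collapses $|I_j(F)|$ to $1$ (forcing $\F$ to live among the vertices of $P$), and it supplies the $d$-partite structure that rules out $PP_d\subseteq\I$.
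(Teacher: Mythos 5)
Your proposal does not prove the statement in question. The statement is Komiya's theorem itself: a KKM-type covering result asserting that for any polytope $P$, any choice of points $q(F)\in F$, and any open cover $\{B_F\}$ indexed by the faces with $G\subseteq\bigcup_{F\subseteq G}B_F$, there is a family $\F$ of faces with $q(P)\in conv\{q(F)\mid F\in\F\}$ and $\bigcap_{F\in\F}B_F\neq\emptyset$. What you have written instead is a deduction of Theorem \ref{dintkaiser} (the bound $\tau\le(d^2-d)\nu$ for separated $d$-intervals) \emph{from} Komiya's theorem: your argument opens by assuming $\tau(H)>(d^2-d)\nu(H)$ and explicitly says it will ``invoke Komiya's theorem (Theorem~\ref{komiya})'' to produce the family $\F$ and the common point $\vec x^*$. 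Relative to the assigned statement this is circular --- the very existence result you are asked to establish is used as a black box --- and nothing in your text supplies the topological content that an actual proof requires (Komiya's own argument runs through a partition of unity subordinate to the cover, a map of $P$ to itself built from the points $q(F)$, and Brouwer's fixed point theorem; a Sperner-type combinatorial argument is the standard alternative). The paper does not prove this theorem either; it cites Komiya's 1994 article, which is the appropriate treatment.

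As a secondary remark: the argument you did write is essentially the paper's proof of Theorem \ref{dintkaiser}, with the simplex product $P=\prod_{j=1}^d\Delta_k$, the sets $B_{\vec t}$ supported on vertices, the balancedness of the resulting $d$-partite hypergraph, and the appeal to F\"uredi's theorem (K\"onig for $d=2$) all matching the paper; your explicit verification that a $d$-partite $d$-uniform hypergraph contains no copy of the projective plane fills in a step the paper leaves implicit. But none of this addresses the theorem you were asked to prove.
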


The theorem is true also if all $B_F$ are closed. The KKMS theorem is the case in which $P=\Delta_k$ and each $q(F)$ is the center of the face $F$.\\ \ \\
{\em Proof of Theorem \ref{dintkaiser}}. It clearly suffices to
prove that if $H$ is a hypergraph of $d$-intervals satisfying
$\tau(H)>kd$ then $\nu(H) \ge \frac{k+1}{d-1}$.  We may
assume that the ground set of $H$ is the $d$-fold product $(0,1)
\times \ldots \times (0,1)$. We apply Komiya's theorem to
$P=\Delta_k \times \Delta_k \times \ldots \times \Delta_k$, the
$d$-fold product of the $k$-dimensional simplex $\Delta_k$ by
itself. A point in $P$ has the form:
$$\vec{x}=((x^1_1, x^1_2, \ldots ,x^1_{k+1}), (x^2_1, x^2_2,
\ldots ,x^2_{k+1}), \ldots ,(x^d_1, x^d_2, \ldots ,x^d_{k+1}))$$
where $x^j_i \ge 0$ and $\sum_{i=1}^{k+1} x^j_i =1$ for every $j$.  For $\vec{x} \in P$ let
$p_{\vec{x}}(i,j)=\sum_{a \le i}x_a^j$.

Let $V$ be the set of all pairs $\{(i,j) \mid  1 \le i \le k+1,~ 1
\le j \le d\}$. A vertex $\vec{t}$ of $P$ corresponds to a $d$-tuple
$(i^1,1),(i^2,2), \ldots ,(i^d,d)$ of vertices in $V$, by the rule
$t^a_b =1$ if $b=i^a$ and $t^a_b=0$ otherwise (here we are using
with respect to $\vec{t}$ the same notation we used for points in
$P$ denoted by $\vec{x}$, namely $t^a_b$ is the $b$ coordinate of
$\vec{t}$ in the $a$-th copy of $(0,1)$). To any such vertex we can
assign an edge $e_{\vec{t}}=\{(i^1,1),(i^2,2), \ldots ,(i^d,d)\}$ in
the complete $d$-partite hypergraph with vertex set $V$ and sides
$V^j=\{(i,j) \mid 1 \le i \le k+1\}$. Let $B_{\vec{t}}$ be the set
of all points $\vec{x} \in P$ for which there exists $h \in H$
satisfying $h^j \subseteq (p_{\vec{x}}(i^j-1,j),
p_{\vec{x}}(i^j,j))$ for all $j$. Let also $q(\vec{t})=\vec{t}$ (the
only possible choice). For all other faces $F$ of $P$ let
$B_F=\emptyset$. Let $q(P)$  be the uniformly all $\frac{1}{k+1}$
vector. The  points $q(F)$ for all other faces $F$ do not come into
play, so we do not define them.

By our assumption, for no $\vec{x}\in P$ is the set of all points
$p_{\vec{x}}(i,j)$ ($1 \le i \le k$, $1 \le j \le d$) a cover for
$H$. Hence $\bigcup B_{\vec{t}}=P$, where the union is over all
vertices $\vec{t}$ of $P$. As in the previous proof, it is also easy
to see that for every face $F =conv(T)$ (where $T$ is a set of
vertices)
 $F \subseteq \bigcup \{B_{\vec{t}} \mid \vec{t} \in T\}$. By Theorem \ref{komiya}, there exists a set $Q$ of vertices, such that $q(P) \in conv(q(\vec{t}) \mid \vec{t} \in Q)$ and $\bigcap_{\vec{t} \in Q}B_{\vec{t}} \neq \emptyset$.

 Let  $E=\{e_{\vec{t}} \mid \vec{t} \in Q\}$. Then the hypergraph $D=(V,E)$ is $d$-partite,
 and the fact that $q(F) \in conv(q(\vec{t}) \mid \vec{t} \in Q)$
 means that $D$ is balanced. This in turn implies that $\nu^*(D)$ is $k+1$
 (the size of one side of $D$). Since $D$ is $d$-partite, by Theorem
 \ref{furedi}
$\nu(D) \ge \frac{\nu^*(D)}{d-1}\ge \frac{k+1}{d-1}$ (for $d=2$ we are using here K\"onig's theorem, rather than F\"uredi's theorem). Let
 $M$ be a matching in $D$ with $|M|\ge \frac{k+1}{d-1}$.
 Let $\vec{x}$ be a point in $\bigcap_{\vec{t} \in Q}B_{\vec{t}}$. By the definition of the sets $B_{\vec{t}}$ , for every edge $e=\{(i^1,1),(i^2,2), \ldots ,(i^d,d)\} \in M$ there exists an edge $h=h(e) \in H$ with   $h^j \subseteq (p_{\vec{x}}(i^j-1,j), p_{\vec{x}}(i^j,j))$. Clearly, the edges $h(e),~ e\in M$ are disjoint, proving that
 $\nu(H) \ge \frac{k+1}{d-1}$, as desired.
\hfill{$\square$}


\section{Bounding the ratio $\frac{\tau^*}{\nu}$ using topology}\label{toptau*nu}

As already mentioned, it is likely that in order to find the right
upper bound on $\frac{\tau^*}{\nu}$ a topological method will be
needed.  In this section we describe an approach that yields the
bound $4d$.

\begin{theorem}\label{main1}
If $H$ is a hypergraph of $d$-intervals then $\tau^*(H)
\le (4d-6+\frac{3}{d})\nu(H)$.
\end{theorem}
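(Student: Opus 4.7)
My plan is to follow the topological approach of Section \ref{sec.tardos-kaiser}, applying the KKMS theorem to obtain a balanced family $\I$ on $[k+1]$ and extracting a matching in $H$ from a matching in $\I$. First, assume all edges of $H$ are contained in $(0,1)$, fix $k$ with $\tau^*(H)>k$ (so $\tau(H)>k$ as well), and for $\vec{x}\in\Delta_k$ set $p_{\vec{x}}(m)=\sum_{i\le m}x_i$. For each $I\subseteq[k+1]$ of size at most $d$, define $B_I\subseteq\Delta_k$ to consist of those $\vec{x}$ for which there is an edge $h\in H$ avoiding every $p_{\vec{x}}(m)$, with each component of $h$ lying inside some interval $(p_{\vec{x}}(i-1),p_{\vec{x}}(i))$ with $i\in I$, and with every such $I$-interval containing at least one component of $h$. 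With this definition $B_I$ is open, the $B_I$'s cover $\Delta_k$, and faces are covered in the KKMS sense; moreover, if $I_1,I_2\in\I$ are disjoint, the witness edges $h(I_1)$ and $h(I_2)$ are automatically disjoint. Applying KKMS produces a balanced collection $\I$ of $(\le d)$-element subsets of $[k+1]$, a point $\vec{x}^\ast\in\bigcap_{I\in\I}B_I$, and witnesses $h(I)\in H$; balance immediately yields $\nu^*(\I)\ge(k+1)/d$.

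The crux is to extract a matching in $\I$ (and hence in $H$) of size at least $(k+1)/(4d-6+3/d)$. A direct application of F\"uredi's theorem to the $(\le d)$-uniform hypergraph $\I$ gives $\nu(\I)\ge(k+1)/(d^2-d+1)$, which suffices for $d\le 3$ since then $d^2-d+1\le 4d-6+3/d$, but is weaker than what is needed for $d\ge 4$. My plan is to sharpen this step by combining the F\"uredi-style count with a weighted Tur\'an argument in the spirit of Section \ref{sec.turan}: build an auxiliary digraph on $\I$ whose vertices are weighted by a perfect fractional matching of $\I$ and whose arcs encode intersections among the $I$'s, then apply Theorem \ref{wtrefined} to force either a large weighted independent set (a matching in $\I$ of the desired size) or a concentration of conflicts that, via the endpoint-counting argument of Section \ref{weightedalon}, can be turned into such a matching directly. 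The lifting map $I\mapsto h(I)$ then converts this into a matching of the same size in $H$.

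The main obstacle is calibrating the numerical constant. The factorization $4d-6+3/d=2(2d-3)+3/d$ suggests a two-stage count, with one factor of $2$ coming from orientation choices in an auxiliary digraph and the rest coming from the $2d$ endpoints of a $d$-interval combined with the balance bound $\nu^*(\I)\ge(k+1)/d$; synchronising these so that the arithmetic lands exactly on $(4d^2-6d+3)/d$ is the delicate point. A fallback that still recovers the asymptotic bound $4d$ is to apply the undirected Theorem \ref{wturan} to the conflict graph of $\I$, losing only a lower-order correction and giving the stated constant up to additive $O(1)$.
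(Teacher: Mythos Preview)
Your setup is the one from Section~\ref{sec.tardos-kaiser}, not the one needed here: you pick $k<\tau^*(H)$, observe that then also $\tau(H)>k$, and define $B_I$ via the \emph{integral} cover failure (an edge missing every point $p_{\vec{x}}(m)$). From that point on the only information you retain about $\I$ is that it is balanced on $[k+1]$ with $|I|\le d$. For such hypergraphs F\"uredi's bound $\nu(\I)\ge(k+1)/(d^2-d+1)$ is tight (the truncated projective plane of order $d-1$ is a balanced $d$-uniform example meeting it), so no ``sharpening'' of the matching-extraction step---whether via Theorem~\ref{wtrefined}, endpoint counting, or anything else that uses only the abstract hypergraph $\I$---can produce a matching larger than $(k+1)/(d^2-d+1)$. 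Your plan and your fallback both try to beat a sharp bound, and therefore cannot succeed; the best you can get from this route is $\tau^*\le(d^2-d+1)\nu$, which for $d\ge 4$ is strictly weaker than the theorem.

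The missing idea is to exploit the \emph{fractional} assumption inside the KKMS setup itself, not afterwards. Take $k=\lfloor\alpha d\rfloor$ where $\alpha<\tau^*(H)$, and put weight $1/d$ on each of the $k$ points $p_{\vec{x}}(m)$; this candidate fractional cover has total weight $k/d\le\alpha$, so it fails, meaning there is an edge $h$ containing fewer than $d$ of the points. Such an $h$ meets at most $2d-1$ of the intervals $[p_{\vec{x}}(m-1),p_{\vec{x}}(m)]$ (each component meets one interval plus one per point it contains). So define $B_T$ only for $|T|\le 2d-1$, with $\vec{x}\in B_T$ when some $h\subset \mathrm{int}(L(T,\vec{x}))$ (with the compactness adjustment as in Section~\ref{topfrac}). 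KKMS now yields a balanced $\T$ with edges of size at most $2d-1$ on a ground set of size $k+1\approx \alpha d$, so $\nu^*(\T)\ge(k+1)/(2d-1)$, and F\"uredi's theorem applied with $2d-1$ in place of $d$ gives
\[
\nu(\T)\;\ge\;\frac{k+1}{(2d-1)\bigl(2d-2+\tfrac{1}{2d-1}\bigr)}\;=\;\frac{k+1}{4d^2-6d+3}\;>\;\frac{\alpha}{4d-6+3/d}.
\]
This is exactly the factorisation you were looking for: the extra factor of $d$ comes not from a second combinatorial argument, but from having $d$ times as many points in the ground set.
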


\begin{proof}
As before, we assume that all edges are contained
in $(0,1)$. We will show that if $\tau^*(H)>\alpha$ then
$\nu(H) > \frac{\alpha}{4d-6+\frac{3}{d}}$. Let $k=\lfloor \alpha d
\rfloor$. The assumption that $\tau^*(H)>\alpha$ implies that for
every point $\vec{x}=(x_1, x_2, \ldots ,x_{k+1})$ in $P =
\Delta_{k}$, putting weights $\frac{1}{d}$ on each of the points
$p_{\vec{x}}(m)=\sum_{i \le m}x_i$, $1 \le m \le k$, does not
constitute a fractional cover for $H$. Note that the points
$p_{\vec{x}}(m)$ are taken with multiplicity, namely if such a point
repeats $q$ times, it is given weight $\frac{q}{d}$. Therefore, for each $\vec{x} \in \Delta_k$ there exists $h \in H$ that contains
fewer than $d$ points $p_{\vec{x}}(m)$ (counted with multiplicity). This implies the following:

\begin{assertion}\label{uncoverededges}
For each $\vec{x} \in \Delta_k$ there exists $h \in H$ that meets at most $2d-1$ intervals of the form $[p_{\vec{x}}(m), p_{\vec{x}}(m+1)]$.
\end{assertion}
\begin{proof}
Let $h \in H$ be a $d$-interval that contains fewer than $d$ points
$p_{\vec{x}}(m)$. For each interval component $h^j$ of $h$, the
number of intervals $[p_{\vec{x}}(i),p_{\vec{x}}(i+1)]$ that $h^j$
meets is equal to the number of points $p_{\vec{x}}(i)$ it contains,
plus $1$. Since $h$ has at most $d$ interval components, it follows that the total number of intervals $[p_{\vec{x}}(i),p_{\vec{x}}(i+1)]$ that $h$ meets is at most $d-1+d=2d-1$.
\end{proof}

Given $\vec{x}\in P$, let $p_{\vec{x}}(0)=0$ and
$p_{\vec{x}}(k+1)=1$. For $T \subseteq [k+1]$ define $L(T,\vec{x})=
\bigcup_{t \in T}[p_{\vec{x}}({t-1}),p_{\vec{x}}(t)]$. We want now
to define sets $B_T$, towards an application of the KKMS theorem. Similarly to the case in the proof of Theorem \ref{reduction}, in order to make sure that the sets are closed we will use a compactness argument.

If $|T| \ge 2d$ we let $B_T$ be the empty set.

Suppose that $|T| \le 2d-1$. Let $A_{T}(>\e)$ (resp. $A_{T}(\ge
\e)$) be the set of those points $\vec{x}$ for which there exists an
edge  $h \in H$ contained in $int(L(T,\vec{x}))$ and satisfying
$dist(h,\partial(L(T,\vec{x})))
> \e$ (resp. $dist(h,\partial(L(T,\vec{x}))) \ge \e$.

\begin{assertion}
 $F(S) \subseteq \bigcup_{\e>0}
\bigcup_{T \subseteq S}A_{T}(>\e)$ for every subset $S$ of $[k+1]$.
\end{assertion}
\begin{proof}
Let $\vec{x}$ be a point in $F(S)$. By Assertion \ref{uncoverededges} there
exist $h \in H$ and $R \subseteq [k+1]$ such that $h \subseteq
int(L(R,\vec{x}))$ and $|R| \le 2d-1$. Let $T = R \cap S$. Since
$x_i=0$ for $i \notin S$ we have $int(L(R,\vec{x}))=int(L(T,\vec{x}))$. Taking
small enough $\e$, we have then $\vec{x} \in A_T(>\e)$. This proves
the assertion.
\end{proof}

As in the proof of Theorem \ref{reduction}, define $B_T=A_T(\ge \delta)$ for small enough $\delta$. Then $F(S) \subseteq
\bigcup_{T \subseteq S}B_{T}$ for all $S \subseteq [k+1]$.

The sets $B_T$ are closed and satisfy the conditions of the KKMS
theorem. Hence there exists a balanced collection $\T$
of subsets of $[k+1]$ such that $\bigcap_{T\in \T} B_{T} \neq
\emptyset$. Let $f:~\T \to \mathbb{R}^+$ be a perfect fractional
matching.
Since $|T| \le 2d-1$ for all $T \in \T$, it follows that $\nu^*(\T)
\ge \frac{|V(\T)|}{2d-1} = \frac{k+1}{2d-1}$. Therefore by Theorem
\ref{furedi} $\nu(\T) \ge \frac{k+1}{(2d-1)(2d-2 + \frac{1}{2d-1})}
> \frac{\alpha}{4d-6+\frac{3}{d}}$, namely there exists a matching
$M$ of size larger than $\frac{\alpha}{4d-6+\frac{3}{d}}$ in $\T$.

Choose a point $\vec{x} \in \bigcap_{T \in \T} B_{T}$. For every $T
\in \T$ let $h(T)$ be an edge of $H$ witnessing
$\vec{x} \in B_T$, a fact entailing $h(T) \subseteq
int(L(T,\vec{x}))$. Using the same argument as in the proof of Theorem \ref{reduction} we have that
for disjoint $T_1, T_2 \in \T$  the edges $h(T_1)$ and $h(T_2)$ are disjoint, which completes the proof of the theorem.

\end{proof}

\section*{Acknowledgment}
\label{sec:acknowledgment}

We thank Andr\'{a}s Gy\'{a}rf\'{a}s for kindly drawing our attention
to the example mentioned after Conjecture~\ref{conj:tau*nu}.


\begin{thebibliography}{99}



\bibitem{alon} N. Alon, Piercing $d$-intervals, Disc. Comput. Geom. 19 (1998), 333--334.


\bibitem{furedi}  Z. F\"{u}redi, Maximum degree and fractional matchings
in uniform hypergraphs, Combinatorica 1, 2 (1981), 155--162.

\bibitem{Gya} A. Gy\'arf\'as, The combinatorics of intervals, Lecture
  notes, 2009.

\bibitem{gl1}  A. Gy\'arf\'as and J. Lehel, A Helly-type problem in trees, in: P. Erd\H{o}s, A. R\'{e}nyi and V.T. S\'os,
eds., {\em Combinatorial Theory and its Applications} (North-Holland, Amsterdam, 1970), 571--584.

\bibitem{gl2} A. Gy\'arf\'as and J. Lehel, Covering and coloring problems for relatives of intervals, Discrete Mathematics 55 (1985), 167--180.


\bibitem{kaiser} T. Kaiser, Transversals of
  $d$-intervals, Disc. Comput. Geom. 18, 2, (1997), 195--203.

\bibitem{komiya} H. Komiya, A simple proof of K-K-M-S theorem, Econ. Theory 4 (1994),
463--466.



\bibitem{matousek} J. Matou\v{s}ek, Lower bounds on the transversal numbers of $d$-intervals,
Disc. Comput. Geom. 26 (3) (2001), 283--287.

\bibitem{tardos} G. Tardos, Transversals of 2-intervals, a topological approach,
Combinatorica 15, 1 (1995), 123--134.

\bibitem{tardos2} G. Tardos, Transversals of $d$-intervals --- comparing three
approaches, in: European Congress of Mathematics, Vol. II (Budapest, 1996), Progress in
Mathematics 169 (Birkh\"{a}user, Basel, 1998), 234--243.

\bibitem{shapley} L. S. Shapley, On balanced games without side
  payments, in: T. C. Hu and S. M. Robinson (eds.), Mathematical
  Programming, Math. Res. Center Publ. 30 (Academic Press, New York, 1973), 261--290.

\bibitem{stanton}
R. G. Stanton, D. D. Cowan and L. O. James, Some results on path
numbers, Proc. Louisiana Conf. on Combinatorics, Graph Theory and
Computing (1970), 112--135.

\end{thebibliography}
\end{document}